\definecolor{brightmaroon}{rgb}{0.76, 0.13, 0.28}
\newtheorem{thm}{Theorem}[section]
\newtheorem{exam}[thm]{Example}
\newtheorem{prop}[thm]{Proposition}
\newtheorem{lem}[thm]{Lemma}
\newtheorem{cor}[thm]{Corollary}
\newtheorem{rem}[thm]{Remark}
\numberwithin{equation}{section}
\newcommand{\Max}{\mathrm{Max}} 
\author{Carmelo A. Finocchiaro}  
\address{Dipartimento di Matematica e Informatica, Universit\`{a} degli Studi di Catania, 95125 Catania, Italy
} 
\email{cafinocchiaro@unict.it} 
\author{Amartya Goswami}
\address{[1] Department of Mathematics and Applied Mathematics\\
University of Johannesburg, Auckland Park Kingsway Campus,
P.O. Box 524, 
Auckland Park 
2006, 
South Africa. [2] National Institute for Theoretical and Computational Sciences (NITheCS), South Africa.}
\email{agoswami@uj.ac.za}  
\author{Dario Spirito}
\address{Dipartimento di Scienze Mathematiche, Informatiche e Fisiche, University of Udine, via delle Scienze 206, 33100 Udine, Italy}
\email{dario.spirito@uniud.it}
\title{Distinguished classes of ideal spaces and their topological properties} 
\subjclass{13C05, 54B35, 13A15} 
\keywords{coarse lower topology, finitely generated ideal, nilpotent ideal, regular ideal, primary ideal, quasi-compactness, sobriety, spectral space} 
\begin{document}
 
\begin{abstract}
We consider the set of all the ideals of a ring, endowed with the coarse lower topology. The aim of this paper is to study topological properties of distinguished subspaces of this space and detect the spectrality of some of them. 
\end{abstract} 

\maketitle
  
\section{Introduction}

Let $(X,\leqslant)$ be a partially ordered set.  The coarse lower topology on $X$ is the topology whose subbasic closed sets are the sets of the type
$$
\{x\}^\uparrow:=\{y\in X\mid x\leqslant y\},
$$
for $x$ varying in $X$. On the other hand, any T$_0$ topology $\mathcal T$ on a space $S$ determines a partial order $\preceq_{\mathcal T}$, called \emph{the specialization order induced by $\mathcal T$}, defined by setting, for every, $s,t\in S$,
$$
s\preceq_{\mathcal T} t:\iff t\in \overline{\{s\}}. 
$$
Thus the coarse lower topology on a partially ordered set $(X,\leqslant)$ is the coarsest topology on $X$ whose specialization order is $\leqslant$. Topologies on posets have intensively been studied, see  \cite{DFP81, D82, N01, XZ17, Y10, LL20, MJH17, W07} for a deeper insight on this circle of ideas. A natural setting where to apply this general framework is that of the set ${\rm Idl}(R)$ of all the ideals of a ring $R$, partially ordered by inclusion. It is well known that  ${\rm Idl}(R)$, endowed with the coarse lower topology, is a \emph{spectral space}, that is, it is homeomorphic to the prime spectrum of a ring; note that the subspace topology on ${\rm Spec}(R)$ induced by the coarse lower topology is the classical Zariski topology of the prime spectrum of a ring. In some recent investigation some other  examples of spectral subspaces of ${\rm Idl}(R)$ have been detected (see \cite{FFS2016, FS2020}), by observing that they are closed sets in the constructible topology of ${\rm Idl}(R)$; the aim of this paper is to find other spectral subspaces of ${\rm Idl}(R)$ without using the constructible topology. This leads to check explicitly the topological properties of Hochster's criterion for spectrality on distinguished classes of ideals. Among the other things, a classification of all Noetherian rings for which the space of primary ideals is spectral is provided.

\section{Topological preliminaries}
Let $X$ be a topological space, and let $Y\subseteq X$ be a nonempty subset. Then, $Y$ is \emph{irreducible} if, whenever $Y\subseteq V_1\cup V_2$ for some closed subsets $V_1,V_2$ of $X$, we have $Y\subseteq V_1$ or $Y\subseteq V_2$. A \emph{generic point} of $Y$ is an element $y\in X$ such that $Y$ is equal to the closure $\overline{\{y\}}$ of the point $y$. If every irreducible closed subset of $X$ has a generic point, $X$ is said to be \emph{sober}.

A \emph{spectral space} is a topological space that is homeomorphic to the prime spectrum of some commutative ring, endowed with the Zariski topology. As it is proved in \cite{H69}, spectral spaces can be characterized purely in topological terms. Precisely, a topological space $X$ is spectral if and only if it is quasi-compact, sober and it has a basis of quasi-compact open sets that is closed under finite intersections. 

Given a spectral space $X$, it is possible to define from the starting topology two new topologies, the \emph{inverse} and the \emph{constructible} topology; in the former, the specialization order is the opposite of the one of the starting topology, while the latter is Hausdorff (and thus its specialization order is trivial). A subset that is closed with respect to the inverse or the constructible topology is spectral also when seen in the starting topology; in particular, subsets that are closed in the constructible topology (called \emph{proconstructible} subsets) provide many examples of spectral spaces. See \cite{H69} and \cite[Chapter 1]{DST19} for the precise definition and for properties of these two topologies.

We end this section with a useful lemma.
\begin{lem}\label{upper-bound}
Let $X$ be a quasi-compact T$_0$ space. Then every chain in $X$ has an upper bound. 
\end{lem}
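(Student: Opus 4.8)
The plan is to translate the notion of upper bound into topological language and then apply quasi-compactness via the finite intersection property. Recall that the order on $X$ is the specialization order $\leqslant$ determined by its T$_0$ topology, so that $x\leqslant y$ exactly when $y\in\overline{\{x\}}$. Consequently, for a chain $C\subseteq X$, an element $z\in X$ is an upper bound of $C$ if and only if $x\leqslant z$ for every $x\in C$, i.e., if and only if $z\in\overline{\{x\}}$ for every $x\in C$; equivalently, $z\in\bigcap_{x\in C}\overline{\{x\}}$. Hence it is enough to prove that this intersection of closed sets is nonempty (we may clearly assume $C\neq\emptyset$).

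Next I would verify that the family $\mathcal{C}:=\{\overline{\{x\}}\mid x\in C\}$ of closed subsets of $X$ has the finite intersection property. Indeed, given $x_1,\dots,x_n\in C$, the fact that $C$ is totally ordered lets us relabel the indices so that $x_1\leqslant\cdots\leqslant x_n$; then $x_n\in\overline{\{x_i\}}$ for every $i=1,\dots,n$ by definition of the specialization order, so $x_n\in\bigcap_{i=1}^n\overline{\{x_i\}}$ and this finite intersection is nonempty. Since $X$ is quasi-compact, a family of closed subsets with the finite intersection property has nonempty total intersection; thus $\bigcap_{x\in C}\overline{\{x\}}\neq\emptyset$, and by the previous paragraph any of its points is an upper bound for $C$.

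There is no real obstacle here: the whole argument is the observation that, via the specialization order, the upper bounds of $C$ are exactly the points lying in all the closures $\overline{\{x\}}$ ($x\in C$), and that being a chain is precisely what guarantees the finite intersection property for this family of closed sets, so that quasi-compactness finishes the job. The T$_0$ hypothesis is used only to make $\leqslant$ a genuine partial order (so that "chain" and "upper bound" carry their usual meaning); the existence argument itself does not require it.
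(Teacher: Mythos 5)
Your argument is correct and is essentially the paper's own proof: both consider the family $\{\overline{\{c\}}\mid c\in C\}$, observe it has the finite intersection property because $C$ is a chain, and invoke quasi-compactness to obtain a point in the total intersection, which is an upper bound under the specialization order. Your write-up merely spells out the finite intersection check in more detail than the paper does.
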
 

\begin{proof}
Let $C\subseteq X$ be a chain and let $\mathcal G:=\left\{\overline{\{c\}}\mid c\in C \right\}$. Then $\mathcal G$ is clearly a chain of nonempty closed sets of $X$ and thus it has the finite intersection property. Since $X$ is quasi-compact, there is a point $z\in \cap \mathcal G$ and, by definition, $c\leqslant z$, for every $c\in C$. The conclusion follows. 
\end{proof}

\section{The coarse lower topology} 
All rings considered in this paper are assumed to be commutative and to possess an identity element. We denote the radical of an ideal $\mathfrak{a}$ by $\sqrt{\mathfrak{a}}.$ An ideal $\mathfrak{a}$ is called \emph{proper} if $\mathfrak{a}$ is not equal to $R$. 

Let $\mathrm{Idl}(R)$ denote the set of all the ideals of $R$. The coarse lower topology on $\mathrm{Idl}(R)$ will be the topology for which the sets of the type 
$$
\{\mathfrak a \}^{\uparrow}:=\{\mathfrak i\in \mathrm{Idl}(R)\mid \mathfrak a\subseteq \mathfrak i \}
$$
(where $\mathfrak a$ runs among the ideals of $R$) form a subbasis of closed sets.

We call a subset of $\mathrm{Idl}(R)$, endowed with the subspace topology induced by the coarse lower topology, an \emph{ideal space} of $R$. Some examples of ideal spaces defined in an algebraic way are the following: prime ideals ($\mathrm{Spec}(R)$),  
maximal ideals ($\mathrm{Max}(R)$), proper ideals 
($\mathrm{Prp}(R)$),  radical ideals 
($\mathrm{Rad}(R)$),  minimal ideals
($\mathrm{Min}(R)$),  
minimal prime ideals ($\mathrm{Spn}(R)$), primary ideals
($\mathrm{Prm}(R)$), 
nil ideals
($\mathrm{Nil}(R)$),  nilpotent ideals
($\mathrm{Nip}(R)$),  irreducible ideals
($\mathrm{Irr}(R)$),   completely irreducible ideals
($\mathrm{Irc}(R)$) (in the sense of \cite{FHO06}), principal ideals
($\mathrm{Prn}(R)$),  proper regular ideals
($\mathrm{Reg}(R)$),  proper finitely generated ideals
($\mathrm{Fgn}(R)$), strongly  irreducible ideals ($\mathrm{Irs}(R)$). We reserve the symbol $\mathrm{X}(R)$ to denote an arbitrary ideal space of a ring $R$.

In view of \cite[7.2.12]{DST19}, $\mathrm{Idl}(R)$ is spectral, with the coarse lower topology, since it is a complete algebraic lattice and its inverse topology coincides with the Zariski topology considered in \cite[Section 5]{FS2020}. Notice that the subset $\mathrm{Prp}(R)$ of $\mathrm{Idl}(R)$ consisting of all proper ideals of $R$ is proconstructible in $\mathrm{Idl}(R)$ (see the paragraph of \cite{FS2020} after Proposition 5.1) and thus, in particular, it is spectral as a subspace of $\mathrm{Idl}(R)$. For an alternative proof of spectrality of $\mathrm{Prp}(R)$, see \cite{G22}.

The following is a ``radical-like'' criterion to establish if an ideal space is sober, that will be useful later.
 \begin{thm}\label{sober}\cite[Theorem 3.18]{DG22} 
Let $\mathrm{ X}$ be a subspace of $\mathrm{ Idl}(R)$. For every ideal $\mathfrak a$ of $R$, let $$\sqrt[\mathrm{ X}]{\mathfrak a}:=\bigcap\left\{\mathfrak b\mid \mathfrak b\in \mathrm X\cap \{\mathfrak a\}^\uparrow \right\}.$$ Then $\mathrm X$ is a sober space if and only if whenever $\mathfrak a$ is an ideal of $R$ and $\mathrm X\cap \{\mathfrak a\}^\uparrow$ is irreducible, then $\sqrt[\mathrm{ X}]{\mathfrak a}\in \mathrm X$. 
 \end{thm}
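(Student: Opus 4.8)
The plan is to reduce the statement to two elementary observations about the coarse lower topology and then combine them mechanically. The first preliminary remark I would make is that, since the subbasic closed sets of $\mathrm{Idl}(R)$ containing a fixed ideal $\mathfrak{i}$ are exactly the $\{\mathfrak{b}\}^{\uparrow}$ with $\mathfrak{b}\subseteq\mathfrak{i}$ (the least of which is $\{\mathfrak{i}\}^{\uparrow}$ itself), one has $\overline{\{\mathfrak{i}\}}=\{\mathfrak{i}\}^{\uparrow}$ in $\mathrm{Idl}(R)$, and hence $\overline{\{\mathfrak{i}\}}^{\mathrm{X}}=\mathrm{X}\cap\{\mathfrak{i}\}^{\uparrow}$ in the subspace $\mathrm{X}$; in particular the specialization order of $\mathrm{X}$ is inclusion.

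Next I would prove the key lemma: for every ideal $\mathfrak{a}$, the set $\mathrm{X}\cap\{\mathfrak{a}\}^{\uparrow}$ admits a generic point in $\mathrm{X}$ if and only if $\sqrt[\mathrm{X}]{\mathfrak{a}}\in\mathrm{X}$, in which case $\sqrt[\mathrm{X}]{\mathfrak{a}}$ is that generic point. For one direction, if $\mathfrak{r}:=\sqrt[\mathrm{X}]{\mathfrak{a}}$ lies in $\mathrm{X}$ (and $\mathrm{X}\cap\{\mathfrak{a}\}^{\uparrow}\neq\emptyset$, the only case that matters), then $\mathfrak{a}\subseteq\mathfrak{r}$ gives $\mathfrak{r}\in\mathrm{X}\cap\{\mathfrak{a}\}^{\uparrow}$, and since moreover $\mathfrak{r}$ is contained in every member of $\mathrm{X}\cap\{\mathfrak{a}\}^{\uparrow}$ one obtains $\mathrm{X}\cap\{\mathfrak{a}\}^{\uparrow}=\mathrm{X}\cap\{\mathfrak{r}\}^{\uparrow}=\overline{\{\mathfrak{r}\}}^{\mathrm{X}}$. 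Conversely, if $\mathfrak{p}\in\mathrm{X}$ is a generic point of $\mathrm{X}\cap\{\mathfrak{a}\}^{\uparrow}$, then $\mathrm{X}\cap\{\mathfrak{a}\}^{\uparrow}=\overline{\{\mathfrak{p}\}}^{\mathrm{X}}=\mathrm{X}\cap\{\mathfrak{p}\}^{\uparrow}$ has $\mathfrak{p}$ as least element, so its intersection — which is by definition $\sqrt[\mathrm{X}]{\mathfrak{a}}$ — equals $\mathfrak{p}$.

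Then I would establish the structural fact that every irreducible closed subset $Y$ of $\mathrm{X}$ has the form $\mathrm{X}\cap\{\mathfrak{a}\}^{\uparrow}$ for a suitable ideal $\mathfrak{a}$. Since $Y$ is irreducible in $\mathrm{X}$ it is irreducible in the ambient $\mathrm{Idl}(R)$, and $\mathrm{Idl}(R)$ is spectral, hence sober; so the closure $\overline{Y}$ of $Y$ in $\mathrm{Idl}(R)$ equals $\{\mathfrak{d}\}^{\uparrow}$ for some ideal $\mathfrak{d}$, and since $Y$ is closed in $\mathrm{X}$ we get $Y=\mathrm{X}\cap\overline{Y}=\mathrm{X}\cap\{\mathfrak{d}\}^{\uparrow}$. (If one prefers not to invoke sobriety of $\mathrm{Idl}(R)$, the same conclusion follows directly: writing $Y=\mathrm{X}\cap C$ with $C$ an intersection of finite unions of principal up-sets, irreducibility of $Y$ lets one replace each finite union by a single up-set $\{\mathfrak{a}_i\}^{\uparrow}\supseteq Y$ occurring in it, and $\mathfrak{d}=\sum_i\mathfrak{a}_i$ works.)

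With these in hand the theorem is immediate. If $\mathrm{X}$ is sober and $\mathrm{X}\cap\{\mathfrak{a}\}^{\uparrow}$ is irreducible, then, being a nonempty closed irreducible subset of $\mathrm{X}$, it has a generic point, so the key lemma yields $\sqrt[\mathrm{X}]{\mathfrak{a}}\in\mathrm{X}$. Conversely, assume the radical condition and let $Y\subseteq\mathrm{X}$ be irreducible and closed; by the structural fact $Y=\mathrm{X}\cap\{\mathfrak{a}\}^{\uparrow}$ for some $\mathfrak{a}$, this set is irreducible, so the hypothesis gives $\sqrt[\mathrm{X}]{\mathfrak{a}}\in\mathrm{X}$, and the key lemma then furnishes a generic point for $Y$; hence $\mathrm{X}$ is sober. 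The only step that demands genuine care is the structural fact — this is where the combinatorics of the coarse lower topology and the subtlety of passing between $\mathrm{X}$ and $\mathrm{Idl}(R)$ enter — while the rest is bookkeeping with intersections of ideals; the degenerate cases ($\mathrm{X}\cap\{\mathfrak{a}\}^{\uparrow}$ empty, or an empty index set) are harmless given the convention that irreducible sets are nonempty.
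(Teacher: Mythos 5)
Your proof is correct and follows essentially the same route as the paper's (i.e., the cited source's) argument: identify the generic point of $\mathrm{X}\cap\{\mathfrak a\}^\uparrow$ with $\sqrt[\mathrm X]{\mathfrak a}$, and show every irreducible closed subset of $\mathrm X$ is of this form by selecting one up-set from each finite union and summing the corresponding ideals. Your alternative derivation of the structural fact from sobriety of $\mathrm{Idl}(R)$ is a harmless variant of the same step.
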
 
 The following fact characterizes quasi-compact ideal spaces. 
 
 \begin{prop}
Let $R$ be a ring, let $\mathrm X\subseteq \mathrm{Idl}(R)$ and let $\Max(\mathrm{X})$ denote the set of maximal elements of $\mathrm{X}$. Then the following conditions are equivalent: 
\begin{enumerate}[\upshape (i)]
\item $\mathrm{X}$ is quasi-compact;

\item for every $\mathfrak x\in\mathrm{X}$ there is $\mathfrak y\in\Max(\mathrm{X})$ such that $\mathfrak x\subseteq \mathfrak  y$, and $\Max(\mathrm{X})$ is quasi-compact.
\end{enumerate}
 \end{prop}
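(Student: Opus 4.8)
The plan is to reduce both implications to one elementary observation about the coarse lower topology. Since the specialization order of $\mathrm{Idl}(R)$, and hence of every subspace $\mathrm X$, is inclusion, and since in any topological space open sets are down-sets for the specialization order, every open subset $U$ of $\mathrm X$ is downward closed for $\subseteq$: if $\mathfrak y\in U$, $\mathfrak x\in\mathrm X$ and $\mathfrak x\subseteq\mathfrak y$, then $\mathfrak x\in U$. I would record this first, either from the definition of the closure of a singleton or by remarking that each subbasic closed set $\mathrm X\cap\{\mathfrak a\}^{\uparrow}$ is an up-set for $\subseteq$, so the same is true of every closed set of $\mathrm X$, hence of the complement of any open set.

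For $(i)\Rightarrow(ii)$ I would argue as follows. Given $\mathfrak x\in\mathrm X$, the set $\mathrm X\cap\{\mathfrak x\}^{\uparrow}$ contains $\mathfrak x$, is closed in $\mathrm X$ hence quasi-compact, and is T$_0$ as a subspace of the spectral space $\mathrm{Idl}(R)$; so Lemma~\ref{upper-bound} applies and every chain in it has an upper bound lying in it. Zorn's lemma then furnishes a maximal element $\mathfrak y$ of $\mathrm X\cap\{\mathfrak x\}^{\uparrow}$, and $\mathfrak y$ is even maximal in $\mathrm X$, because any ideal of $\mathrm X$ containing $\mathfrak y$ also contains $\mathfrak x$ and hence lies in $\mathrm X\cap\{\mathfrak x\}^{\uparrow}$. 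This establishes the first clause of (ii). For the second, let $\{U_\alpha\}_{\alpha}$ be a family of open subsets of $\mathrm X$ covering $\Max(\mathrm X)$. Every $\mathfrak x\in\mathrm X$ lies below some $\mathfrak y\in\Max(\mathrm X)$, which lies in some $U_\alpha$; by downward closedness $\mathfrak x\in U_\alpha$ as well. Hence $\{U_\alpha\}$ covers all of $\mathrm X$, so by quasi-compactness of $\mathrm X$ a finite subfamily covers $\mathrm X$, and a fortiori $\Max(\mathrm X)$.

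For $(ii)\Rightarrow(i)$ I would start from an arbitrary open cover $\{U_\alpha\}$ of $\mathrm X$. Its restriction covers $\Max(\mathrm X)$, which is quasi-compact by hypothesis, so finitely many members $U_{\alpha_1},\dots,U_{\alpha_n}$ cover $\Max(\mathrm X)$. Given any $\mathfrak x\in\mathrm X$, choose $\mathfrak y\in\Max(\mathrm X)$ with $\mathfrak x\subseteq\mathfrak y$; then $\mathfrak y\in U_{\alpha_i}$ for some $i$, and downward closedness gives $\mathfrak x\in U_{\alpha_i}$. Thus $U_{\alpha_1},\dots,U_{\alpha_n}$ already cover $\mathrm X$, which is therefore quasi-compact. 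The only step carrying real content is the order-theoretic one inside $(i)\Rightarrow(ii)$ — deducing from quasi-compactness of $\mathrm X$ that each element of $\mathrm X$ sits below a maximal element — for which Lemma~\ref{upper-bound} together with Zorn's lemma is exactly the right tool; everything else is a formal game with downward-closed open sets.
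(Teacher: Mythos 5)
Your proof is correct and follows essentially the same route as the paper's: for (i)$\Rightarrow$(ii) you use that $\mathrm X\cap\{\mathfrak x\}^{\uparrow}$ is a closed, hence quasi-compact, subspace together with Lemma~\ref{upper-bound} and Zorn's lemma, and both implications' covering arguments are the ones in the paper. The only difference is that you make explicit the fact that open sets of $\mathrm X$ are downward closed for inclusion, a point the paper uses silently.
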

 
 \begin{proof}
 	(ii)$\Rightarrow$(i). Let $\mathcal{U}$ be an open cover of $\mathrm{X}$. Then, $\mathcal{U}$ also covers $\Max(\mathrm{X})$, and thus there is a finite subcover $\mathcal{U}'$ of $\Max(\mathrm{X})$. Take $\mathfrak x\in\mathrm{X}$: by hypothesis, there is $\mathfrak y\in\Max(\mathrm{X})$ such that $\mathfrak x\subseteq \mathfrak y$, and $U\in\mathcal{U}'$ such that $\mathfrak y\in U$. Then, $\mathfrak x\in U$, and thus $\mathcal{U}'$ is a finite subcover also for $\mathrm{X}$. Hence $\mathrm{X}$ is quasi-compact.
 	
 	(i)$\Rightarrow$(ii). Let $\mathfrak x\in\mathrm{X}$, and consider $\mathrm{X}':=\{\mathfrak{x}\}^\uparrow\cap {\rm X}$. Then, $\mathrm{X}'$ is a closed subset of $\mathrm{X}$, and thus it is quasi-compact; by Lemma \ref{upper-bound}, every ascending chain in $\mathrm{X}'$ is bounded, and hence by Zorn's Lemma $\mathrm{X}'$ has maximal elements, i.e., $\mathfrak{x}$ is contained in some $\mathfrak{y}\in\Max(\mathrm{X})$.
%

 	Let now $\mathcal{U}$ be an open cover of $\Max(\mathrm{X})$. Then, $\mathcal{U}$ is also a cover of $\mathrm{X}$, and thus it admits a finite subcover, which will be also a finite subcover of $\Max(\mathrm{X})$. Thus, $\Max(\mathrm{X})$ is quasi-compact.
 \end{proof}

As a particular case of the previous criterion we get the following known fact. 
 
\begin{cor}{\cite[Theorem 3.11]{DG22}}\label{comp}
Let $R$ be a ring and let $\mathrm X\subseteq \mathrm{Idl}(R)$ such that $\Max(R)\subseteq \mathrm X$. Then $\mathrm X$ is quasi-compact. 
\end{cor}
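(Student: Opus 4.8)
The plan is to obtain this as a direct consequence of the preceding Proposition, by checking that $\mathrm X$ satisfies its condition~(ii). I would begin by splitting into two cases according to whether the improper ideal $R$ belongs to $\mathrm X$. If $R\in\mathrm X$, everything is easy: $R$ is then the greatest element of $\mathrm X$, so $\Max(\mathrm X)=\{R\}$, every $\mathfrak x\in\mathrm X$ satisfies $\mathfrak x\subseteq R\in\Max(\mathrm X)$, and a one-point space is quasi-compact; condition~(ii) holds. So the real work is in the case $R\notin\mathrm X$, i.e. when every ideal of $\mathrm X$ is proper.

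Assume $R\notin\mathrm X$. For each $\mathfrak x\in\mathrm X$, being proper it is contained in some maximal ideal $\mathfrak m$ of $R$, and $\mathfrak m\in\Max(R)\subseteq\mathrm X$ by hypothesis; since the only ideal strictly containing $\mathfrak m$ is $R$, which is excluded from $\mathrm X$, we get $\mathfrak m\in\Max(\mathrm X)$. This simultaneously establishes the first half of condition~(ii) and the inclusion $\Max(R)\subseteq\Max(\mathrm X)$. For the reverse inclusion, any $\mathfrak y\in\Max(\mathrm X)$ is proper, hence contained in a maximal ideal $\mathfrak m\in\mathrm X$, and maximality of $\mathfrak y$ in $\mathrm X$ forces $\mathfrak y=\mathfrak m$; thus $\Max(\mathrm X)=\Max(R)$.

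What is then left---and this is the only step with any content---is to verify that $\Max(R)$, with the subspace topology inherited from the coarse lower topology on $\mathrm{Idl}(R)$, is quasi-compact. Its subbasic closed sets are the sets $\{\mathfrak a\}^\uparrow\cap\Max(R)$, so by Alexander's subbasis lemma it is enough to extract a finite subcover from any cover of $\Max(R)$ by complements of such sets. Such complements cover $\Max(R)$ precisely when $\bigcap_i\bigl(\{\mathfrak a_i\}^\uparrow\cap\Max(R)\bigr)=\emptyset$, i.e. when no maximal ideal contains all the $\mathfrak a_i$, i.e. when $\sum_i\mathfrak a_i=R$; expressing $1$ as a finite sum of elements drawn from the $\mathfrak a_i$ then yields a finite subcover. (One may instead simply cite the well-known quasi-compactness of the maximal spectrum and skip this verification.) Plugging both halves of condition~(ii) into the Proposition gives that $\mathrm X$ is quasi-compact. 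The only thing to watch out for is the bookkeeping with the improper ideal $R$, which alters the description of $\Max(\mathrm X)$ but never the validity of condition~(ii).
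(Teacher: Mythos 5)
Your proof is correct and takes essentially the same route as the paper, which derives this corollary directly from the preceding Proposition (``as a particular case of the previous criterion'') by verifying its condition (ii). Your extra bookkeeping for the improper ideal $R$ and the explicit Alexander-subbasis check that $\Max(R)$ is quasi-compact are details the paper leaves implicit, and you handle them correctly.
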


\begin{cor}  
The ideal spaces $\mathrm{Max}(R),$ $\mathrm{Spec}(R),$ $\mathrm{Irs}(R),$ $\mathrm{Prm}(R),$ $\mathrm{Irr}(R),$ $\mathrm{Irc}(R),$ $\mathrm{Rad}(R),$ $\mathrm{Prp}(R)$ are quasi-compact.
\end{cor}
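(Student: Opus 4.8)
The plan is to reduce the whole statement to Corollary~\ref{comp}. That result says that any ideal space $\mathrm{X}$ containing every maximal ideal of $R$ is automatically quasi-compact, so it suffices to verify, for each of the eight classes in the statement, that $\Max(R)\subseteq\mathrm{X}$. Thus the proof becomes nothing more than a checklist of elementary membership facts about maximal ideals, one per class, after which a single invocation of Corollary~\ref{comp} finishes everything simultaneously.

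First I would dispatch the straightforward inclusions: $\Max(R)\subseteq\mathrm{Max}(R)$ is a tautology; $\Max(R)\subseteq\mathrm{Prp}(R)$ since a maximal ideal is proper by definition; $\Max(R)\subseteq\mathrm{Spec}(R)$ since maximal ideals are prime; and $\Max(R)\subseteq\mathrm{Rad}(R)$ since a prime ideal equals its own radical. For $\mathrm{Irs}(R)$ I would record that a prime ideal $\mathfrak p$ is strongly irreducible: if $\mathfrak b\cap\mathfrak c\subseteq\mathfrak p$, then $\mathfrak b\mathfrak c\subseteq\mathfrak b\cap\mathfrak c\subseteq\mathfrak p$, whence $\mathfrak b\subseteq\mathfrak p$ or $\mathfrak c\subseteq\mathfrak p$; and since a strongly irreducible ideal is a fortiori irreducible (from $I=\mathfrak b\cap\mathfrak c$ one gets $\mathfrak b\subseteq I$ or $\mathfrak c\subseteq I$, and then equality), this simultaneously gives $\Max(R)\subseteq\mathrm{Irr}(R)$. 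For $\mathrm{Prm}(R)$ I would use that $R/\mathfrak m$ is a field: if $ab\in\mathfrak m$ with $a\notin\mathfrak m$, then $a$ is a unit modulo $\mathfrak m$, forcing $b\in\mathfrak m$, so $\mathfrak m$ is primary.

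The one case that needs a moment's thought is $\mathrm{Irc}(R)$, and here I would lean on the characterization from \cite{FHO06}: an ideal is completely irreducible exactly when it is maximal, with respect to inclusion, among the ideals of $R$ that avoid some fixed element $r\in R$. Given a maximal ideal $\mathfrak m$, choose any $r\in R\setminus\mathfrak m$; the only ideal of $R$ properly containing $\mathfrak m$ is $R$ itself, which contains $r$, so $\mathfrak m$ is maximal among ideals avoiding $r$, hence completely irreducible. (Equivalently: $\mathfrak m$ is covered only by $R$ in $\mathrm{Idl}(R)$, so it cannot be an intersection of strictly larger ideals.) Once all eight inclusions $\Max(R)\subseteq\mathrm{X}$ are in hand, Corollary~\ref{comp} applies to each of $\mathrm{Max}(R),\mathrm{Spec}(R),\mathrm{Irs}(R),\mathrm{Prm}(R),\mathrm{Irr}(R),\mathrm{Irc}(R),\mathrm{Rad}(R),\mathrm{Prp}(R)$ and yields quasi-compactness. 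I anticipate no real obstacle: the content is precisely this bundle of standard facts, with the completely-irreducible verification the only spot where one must cite a non-definitional characterization rather than argue directly.
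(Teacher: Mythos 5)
Your proposal is correct and matches the paper's intended argument exactly: the corollary is stated immediately after Corollary~\ref{comp} with no written proof, precisely because each of the eight classes contains every maximal ideal (maximal ideals being proper, prime, radical, primary, strongly irreducible, irreducible, and completely irreducible), so Corollary~\ref{comp} applies directly. Your individual membership verifications, including the completely-irreducible case via the characterization in \cite{FHO06}, are all accurate.
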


In case $R$ is a Noetherian ring, the situation regarding quasi-compactness of ideal spaces is much  simpler. 

\begin{prop}\label{noetherian}
For a ring $R$, the following conditions are equivalent. 
\begin{enumerate}[\upshape (i)]
\item $R$ is a Noetherian ring. 

\item $\mathrm{Idl}(R)$ is a Noetherian space. 
\end{enumerate}
\end{prop}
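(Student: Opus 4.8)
The plan is to convert chain conditions on ideals of $R$ into chain conditions on closed subsets of $\mathrm{Idl}(R)$, using that a topological space is Noetherian precisely when it satisfies the descending chain condition on closed subsets.

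For the implication (ii)$\Rightarrow$(i) I would argue by contraposition: if $R$ is not Noetherian, pick a strictly increasing chain of ideals $\mathfrak a_1\subsetneq\mathfrak a_2\subsetneq\cdots$. Then $\{\mathfrak a_1\}^\uparrow\supseteq\{\mathfrak a_2\}^\uparrow\supseteq\cdots$ is a descending chain of closed subsets of $\mathrm{Idl}(R)$, and it is \emph{strictly} descending, since $\mathfrak a_n\in\{\mathfrak a_n\}^\uparrow$ while $\mathfrak a_n\notin\{\mathfrak a_{n+1}\}^\uparrow$ (because $\mathfrak a_{n+1}\not\subseteq\mathfrak a_n$). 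Hence $\mathrm{Idl}(R)$ is not a Noetherian space.

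For (i)$\Rightarrow$(ii) I would assume $R$ Noetherian and suppose, for contradiction, that some descending chain of closed subsets of $\mathrm{Idl}(R)$ does not stabilize; passing to a subchain, write it as $C_1\supsetneq C_2\supsetneq\cdots$ with each $C_n$ closed, and choose $\mathfrak i_n\in C_n\setminus C_{n+1}$. Because $C_{n+1}$ is closed, $\mathfrak i_n$ lies in a basic open set (a finite intersection of complements of sets $\{\mathfrak g\}^\uparrow$) disjoint from $C_{n+1}$; equivalently, there is a finite family $G_n$ of ideals of $R$ with $C_{n+1}\subseteq\bigcup_{\mathfrak g\in G_n}\{\mathfrak g\}^\uparrow$ and $\mathfrak g\not\subseteq\mathfrak i_n$ for all $\mathfrak g\in G_n$. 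In particular, since $\mathfrak i_m\in C_m\subseteq C_{n+1}$ for every $m>n$, some member of $G_n$ is contained in $\mathfrak i_m$. The core of the proof is then a diagonal argument: set $n_0=1$, and by finiteness of $G_{n_0}$ fix $\mathfrak g_0\in G_{n_0}$ together with an infinite set $A_0$ of indices $>n_0$ such that $\mathfrak g_0\subseteq\mathfrak i_m$ for all $m\in A_0$; recursively, with $n_{k+1}:=\min A_k$, apply the pigeonhole to $G_{n_{k+1}}$ along $A_k\setminus\{n_{k+1}\}$ to obtain $\mathfrak g_{k+1}\in G_{n_{k+1}}$ and an infinite set $A_{k+1}\subseteq A_k\setminus\{n_{k+1}\}$ with $\mathfrak g_{k+1}\subseteq\mathfrak i_m$ for all $m\in A_{k+1}$. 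Since the $A_k$ are nested and $n_{k+1}\in A_k$, all of $\mathfrak g_0,\dots,\mathfrak g_k$ lie in $\mathfrak i_{n_{k+1}}$, whereas $\mathfrak g_{k+1}\not\subseteq\mathfrak i_{n_{k+1}}$ (as $\mathfrak g_{k+1}\in G_{n_{k+1}}$); hence the partial sums $\mathfrak h_k:=\mathfrak g_0+\cdots+\mathfrak g_k$ form a strictly increasing chain of ideals, contradicting ACC in $R$. Therefore $(C_n)$ stabilizes and $\mathrm{Idl}(R)$ is Noetherian.

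I expect this diagonal extraction to be the real obstacle. The naive hope, that over a Noetherian ring every closed subset of $\mathrm{Idl}(R)$ is a finite union of sets $\{\mathfrak a\}^\uparrow$ (so that one could simply read off an ascending chain of ideals from "generators" of the $C_n$), is false; so the ascending chain contradicting ACC must be built by hand, by funneling the finite separating families $G_n$ through the pigeonhole and exploiting at each stage that the freshly chosen generator escapes $\mathfrak i_{n_{k+1}}$ while all earlier ones are trapped inside it. The remaining ingredients are routine manipulations with the subbasis of the coarse lower topology.
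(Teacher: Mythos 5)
Your proof is correct, but it takes a genuinely different route from the paper's. For (ii)$\Rightarrow$(i) you convert a strictly ascending chain of ideals directly into a strictly descending chain of closed sets $\{\mathfrak a_1\}^\uparrow\supsetneq\{\mathfrak a_2\}^\uparrow\supsetneq\cdots$; the paper instead exhibits, for a non-finitely-generated ideal $\mathfrak a$, a non-quasi-compact subspace (the finitely generated ideals properly contained in $\mathfrak a$, with the closed sets $\{aR\}^\uparrow$, $a\in\mathfrak a$, having the finite intersection property but empty intersection). Your version is shorter and more direct. For (i)$\Rightarrow$(ii) the paper uses the characterization of Noetherian spaces as those in which every subset is quasi-compact: given subbasic closed sets $X\cap\{\mathfrak a_i\}^\uparrow$ with the finite intersection property, the ideal $\sum_i\mathfrak a_i$ is finitely generated, hence equals a finite subsum, so the total intersection coincides with a finite one and is nonempty; Alexander's subbasis theorem then finishes. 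You work instead with the descending chain condition on closed sets and extract, by a pigeonhole/diagonal argument on the finite separating families $G_n$, a strictly ascending chain $\mathfrak g_0\subsetneq\mathfrak g_0+\mathfrak g_1\subsetneq\cdots$ contradicting ACC; I checked the extraction and it is sound (each $\mathfrak g_j$ with $j\leqslant k$ is trapped in $\mathfrak i_{n_{k+1}}$ because $n_{k+1}\in A_j$, while $\mathfrak g_{k+1}\in G_{n_{k+1}}$ escapes it). The paper's argument is considerably shorter because finite generation of $\sum_i\mathfrak a_i$ does all the combinatorial work in one stroke, whereas yours avoids Alexander's theorem at the cost of the Ramsey-type bookkeeping. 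One side remark of yours is actually false, though it does not affect the proof: over a Noetherian ring every closed subset of $\mathrm{Idl}(R)$ \emph{is} a finite union of sets $\{\mathfrak a\}^\uparrow$, since a Noetherian sober space decomposes every closed set into finitely many irreducible components and (as the first half of the paper's proof shows) the irreducible closed sets here are exactly the sets $\{\mathfrak a\}^\uparrow$. Your real point --- that this decomposition cannot be assumed a priori without circularity --- is of course right.
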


\begin{proof}
(i)$\Rightarrow$(ii). Suppose $V$ is closed and irreducible: then, $\mathfrak{a}:=\bigcap\{\mathfrak{b}\mid \mathfrak{b}\in V\}$ is finitely generated (since $R$ is Noetherian) and thus $V$ must be equal to $\{\mathfrak{a}\}^\uparrow$. In particular, $V$ has a generic point. Thus $\mathrm{Idl}(R)$ is sober.

We show that every subset $X$ of $\mathrm{Idl}(R)$ is quasi-compact. Take a collection $\mathcal G:=\{X\cap \{\mathfrak a_i\}^\uparrow\mid i\in I \}$ of subbasic closed sets of $X$ with the finite intersection property. By assumption, the ideal $\mathfrak b:=\sum_{i\in I}\mathfrak a_i$ is finitely generated, say $\mathfrak b=(\alpha_1,\ldots,\alpha_n)$. For every $1\leqslant j\leqslant n$, there exists a finite subset $H_j$ of $I$ such that $\alpha_j\in \sum_{i\in H_j}\mathfrak a_i$. Thus, if $H:=\bigcup_{j=1}^nH_j$, it immediately follows that $\mathfrak b=\sum_{i\in H}\mathfrak a_i$. Hence we have
$$
\cap\, \mathcal G=X\cap \{\mathfrak b\}^\uparrow=X\cap \left\{\sum_{i\in H}\mathfrak a_i \right\}^\uparrow= \bigcap_{i\in H}X\cap \{\mathfrak a_i \}^\uparrow\neq \emptyset,
$$
since $H$ is finite and $\mathcal G$ has the finite intersection property. Then the conclusion follows by Alexander's subbasis Theorem. 

(ii)$\Rightarrow$(i). Assume that $\mathrm{Idl}(R)$ is Noetherian and that there exists an ideal $\mathfrak a$ of $R$ that is not finitely generated. Then the subspace $X:=\{\mathfrak b\in \mathrm{Fgn}(R)\mid \mathfrak b\subset \mathfrak a \}$ is not quasi-compact. 
As a matter of fact, the collection of closed sets $\mathcal G:=\{X\cap\{aR\}^\uparrow\mid a\in \mathfrak a \}$ of $X$ clearly has the finite intersection property, but has empty intersection. 
\end{proof} 

The following corollary is now immediate. 
\begin{cor}\label{cor-noet}
Let $R$ be a Noetherian ring and let $\mathrm X\subseteq \mathrm{Idl}(R)$. Then $\mathrm X$ is spectral if and only if it is sober. 
\end{cor}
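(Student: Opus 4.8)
The plan is to read this off Hochster's topological characterization of spectral spaces (recalled in Section~2) together with Proposition~\ref{noetherian}. One implication needs nothing: a spectral space is sober essentially by definition, since being homeomorphic to $\mathrm{Spec}$ of a ring forces sobriety (equivalently, sobriety is one of the three clauses in Hochster's criterion). So the whole content is the reverse implication, and I would set up the argument so that soberness is the \emph{only} nontrivial hypothesis left to supply, with quasi-compactness and the basis condition coming for free from Noetherianity.

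For the converse, suppose $\mathrm X$ is sober. The key observation is that, by Proposition~\ref{noetherian}, $R$ Noetherian makes $\mathrm{Idl}(R)$ a Noetherian space; and, as is shown in the course of that proof, this means that \emph{every} subset of $\mathrm{Idl}(R)$ is quasi-compact. In particular $\mathrm X$ is quasi-compact. Moreover, any subset of $\mathrm X$ that is open in the subspace topology is of the form $U\cap\mathrm X$ with $U$ open in $\mathrm{Idl}(R)$, hence is again a subset of $\mathrm{Idl}(R)$ and therefore quasi-compact. Thus \emph{all} open subsets of $\mathrm X$ are quasi-compact.

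It then remains only to assemble Hochster's criterion: $\mathrm X$ is quasi-compact, it is sober by assumption, and the family of all open subsets of $\mathrm X$ is a basis of quasi-compact open sets which is trivially closed under finite intersections. Hence $\mathrm X$ is spectral, which completes the proof. I do not expect any real obstacle here — the statement is genuinely a corollary of Proposition~\ref{noetherian}; the only point requiring a moment's care is the remark that relatively open subsets of $\mathrm X$ are themselves subsets of $\mathrm{Idl}(R)$, so that the blanket quasi-compactness furnished by the Noetherian hypothesis applies to them and yields the basis clause at no cost.
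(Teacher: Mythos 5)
Your argument is correct and is exactly the intended one: the paper labels this "immediate" from Proposition \ref{noetherian}, whose proof shows that every subset of $\mathrm{Idl}(R)$ is quasi-compact when $R$ is Noetherian, so that $\mathrm X$ and all of its relatively open subsets are quasi-compact and Hochster's criterion reduces to sobriety. Nothing to add.
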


The next proposition is a general result about the relationship between lower directed subsets  and soberness. We recall that, if $X$ is a spectral space and $Z\subseteq X$ is lower directed (in the order induced by the topology), then $Z$ always admits an infimum in $X$ \cite[Proposition 4.2.1]{DST19}.
\begin{prop}\label{inf-sober-1}
Let $X$ be a spectral space, and let $Z\subseteq Y\subseteq X$ be subsets such that $Z$ is lower directed (in the order induced by the topology). If $Y$ is sober, then $\inf Z\in Y$.
\end{prop}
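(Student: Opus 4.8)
The plan is to exhibit $w:=\inf Z$ (which exists in $X$ by \cite[Proposition 4.2.1]{DST19}) as the generic point of a suitable irreducible closed subset of $Y$, and then to appeal to the sobriety of $Y$. The candidate closed set is $C:=\overline{Z}\cap Y$, where $\overline{Z}$ denotes the closure of $Z$ in $X$.

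First I would check that $\overline{Z}$ is irreducible. Since $Z$ is lower directed, the family $\{\overline{\{z\}}\mid z\in Z\}$ is up-directed under inclusion (if $z_{3}\leqslant z_{1},z_{2}$ in $Z$, then $\overline{\{z_{1}\}},\overline{\{z_{2}\}}\subseteq\overline{\{z_{3}\}}$), so its union is a directed union of irreducible sets, hence irreducible by an elementary argument; its closure equals $\overline{Z}$, which is therefore irreducible. As $X$ is spectral, and thus sober, $\overline{Z}$ has a generic point $p$. A short computation with the specialization order identifies $p$ with $w$: on the one hand $Z\subseteq\overline{\{p\}}$, so $p$ is a lower bound of $Z$ and $p\leqslant w$; on the other hand $w$ is a lower bound of $Z$, whence $\overline{Z}\subseteq\overline{\{w\}}$, and $p\in\overline{Z}$ gives $w\leqslant p$. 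Thus $w=p\in\overline{Z}=\overline{\{w\}}$.

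Now $C=\overline{Z}\cap Y$ is closed in $Y$, and from $Z\subseteq C\subseteq\overline{Z}$ its closure in $X$ is again $\overline{Z}$, so $C$ is irreducible (as a subspace of $X$, equivalently of $Y$). Sobriety of $Y$ then produces a generic point $y\in Y$ of $C$, i.e.\ $C=\overline{\{y\}}\cap Y$. To conclude I would show $y=w$: from $y\in C\subseteq\overline{Z}=\overline{\{w\}}$ we get $w\leqslant y$, while $Z\subseteq C\subseteq\overline{\{y\}}$ makes $y$ a lower bound of $Z$, so $y\leqslant w=\inf Z$; since $X$ is $\mathrm{T}_{0}$ this forces $y=w$, and therefore $\inf Z=w=y\in Y$.

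The substantive ingredients are the irreducibility of $\overline{Z}$ together with the identification of its generic point with $\inf Z$ — which is the heart of the matter and relies on sobriety of the ambient spectral space $X$ — while the remaining steps are routine manipulations of the specialization order. The one point to be careful about is not to assume $w\in Y$ prematurely, since that is precisely the assertion being proved: the argument must route through the generic point $y\in Y$ furnished by sobriety of $Y$ and only at the end recognize it as $w$.
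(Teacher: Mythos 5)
Your proof is correct, and its overall strategy coincides with the paper's: both arguments apply sobriety of $Y$ to the very same closed set --- your $C=\overline{Z}\cap Y$ equals the paper's $Y'=\{\inf Z\}^\uparrow\cap Y$, since you show $\overline{Z}=\overline{\{\inf Z\}}$ --- and both conclude by pinning down its generic point as $\inf Z$ via the same pair of specialization-order inequalities. The only genuine divergence is in how irreducibility of this set is verified. The paper argues directly by contradiction: given closed sets $V_1,V_2$ covering $Y'$, lower-directedness of $Z$ produces a common lower bound of a point of $V_1\cap Z$ and a point of $V_2\cap Z$, and one uses that closed sets are specialization-closed (together with the fact, used implicitly, that a closed set contains the infimum of any down-directed subset of it). You instead note that $\overline{Z}$ is the closure of the up-directed union $\bigcup_{z\in Z}\overline{\{z\}}$ of irreducible sets, hence irreducible, and then invoke sobriety of the ambient spectral space $X$ to identify the generic point of $\overline{Z}$ with $\inf Z$ (an argument which, incidentally, re-proves the existence of that infimum rather than merely quoting it). Both verifications are sound; yours trades the paper's reliance on the closure properties of down-directed infima for an appeal to sobriety of $X$, and is arguably the more transparent of the two, while the paper's is more self-contained at the level of point-set manipulations. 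Your closing caution --- that one must not assume $\inf Z\in Y$ before sobriety of $Y$ has produced the generic point --- is exactly the right thing to watch for, and you handle it correctly.
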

\begin{proof}
Let $z$ be the infimum of $X$ in $Z$ (which exists by \cite[Proposition 4.2.1]{DST19}). Consider $Y':=\{z\}^\uparrow\cap Y$: then, $Y'$ is a closed subset of $Y$. Suppose that $Y'$ is not irreducible: then, there are closed subsets $V_1,V_2$ of $X$ such that $(V_1\cup V_2)\cap Y=Y'$ and such that $Y'$ is not contained in either $V_1$ or $V_2$. If $Z\subseteq V_1$, then $z=\inf Z\in V_1$ and $Y'\subseteq V_1$, a contradiction (and analogously for $V_2$); thus, there are $v_1\in (V_1\cap Z)\setminus V_2$ and $v_2\in (V_2\cap Z)\setminus V_1$. Since $Z$ is lower directed, there is $y\in Z$ such that $y\leqslant v_1$ and $y\leqslant v_2$; by construction, $y$ belongs to one of the $V_i$, say $V_1$. Since $V_1$ is closed, $y\leqslant v_2$ implies that $v_2\in V_1$, a contradiction.

Therefore, $Y'$ is irreducible. Since $Y$ is sober, $Y'$ has a generic point $z'$; moreover, $z'\leqslant z''$ for every $z''\in Z$, and thus $z'\leqslant z$. Since $y\geqslant z$ for every $y\in Y'$, we also have $z'\geqslant z$. Hence $z'=z\in Z$, as claimed.
\end{proof}

\begin{rem}
Proposition \ref{inf-sober-1} applies, in particular, when $Z$ is a chain.
\end{rem}

\section{Some classes of ideal spaces} 
We now discuss some relevant topological properties of some classes of ideal spaces.  

\subsection{Strongly irreducible ideals and their subclasses}  

It follows from Corollary \ref{comp} that the space of strongly irreducible ideals $\mathrm{Irs}(R)$ is quasi-compact and so are $\mathrm{Max}(R)$ and $\mathrm{Spec}(R)$. Since $\mathrm{Spec}(R)$ is spectral, it is also sober. It has been proved in \cite[Proposition 3.19]{DG22} that $\mathrm{Irs}(R)$ is sober.

\subsection{Finitely generated ideals.} 

Given a ring $R$, let $\mathrm{Fgn}(R)$ be the space of proper finitely generated ideals of $R$, endowed with the subspace topology induced by the coarse lower topology of the space $\mathrm{Idl}(R)$ of all the ideals of $R$. 

\begin{prop}
For a ring $R$ the following conditions are equivalent. 
\begin{enumerate}[\upshape (i)]
\item $\mathrm{Fgn}(R)$ is quasi-compact. 
 
\item $\mathrm{Max}(R)\subseteq \mathrm{Fgn}(R)$. 
\end{enumerate}
\end{prop}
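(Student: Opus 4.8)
The plan is to prove the two implications separately; both are short, the second being a verbatim adaptation of the implication (ii)$\Rightarrow$(i) of Proposition \ref{noetherian}.

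For (ii)$\Rightarrow$(i) I would simply invoke Corollary \ref{comp}. The hypothesis $\mathrm{Max}(R)\subseteq\mathrm{Fgn}(R)$ is precisely the hypothesis ``$\Max(R)\subseteq\mathrm X$'' of that corollary applied to $\mathrm X:=\mathrm{Fgn}(R)$, so $\mathrm{Fgn}(R)$ is quasi-compact. (One could alternatively go through the Proposition preceding Corollary \ref{comp}: every proper, hence every proper finitely generated, ideal is contained in a maximal ideal, which by hypothesis lies in $\mathrm{Fgn}(R)$ and is thus a maximal element of $\mathrm{Fgn}(R)$; and $\Max(\mathrm{Fgn}(R))=\mathrm{Max}(R)$ is quasi-compact. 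But the appeal to Corollary \ref{comp} is the most economical.)

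For (i)$\Rightarrow$(ii) I would argue by contraposition. Suppose some maximal ideal $\mathfrak m$ of $R$ is not finitely generated, and consider the family
$$
\mathcal G:=\left\{\mathrm{Fgn}(R)\cap\{aR\}^\uparrow\mid a\in\mathfrak m\right\}
$$
of subbasic closed subsets of $\mathrm{Fgn}(R)$. I would then check two things. First, $\mathcal G$ has the finite intersection property: for $a_1,\dots,a_n\in\mathfrak m$ the ideal $(a_1,\dots,a_n)$ is finitely generated and proper — proper because it is contained in the proper ideal $\mathfrak m$ — so it belongs to $\mathrm{Fgn}(R)$ and lies in each $\{a_iR\}^\uparrow$. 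Second, $\bigcap\mathcal G=\emptyset$: any $\mathfrak b\in\bigcap\mathcal G$ contains every element of $\mathfrak m$, hence $\mathfrak m\subseteq\mathfrak b$, and since $\mathfrak b$ is proper and $\mathfrak m$ is maximal we get $\mathfrak b=\mathfrak m$, which does not lie in $\mathrm{Fgn}(R)$ since $\mathfrak m$ is not finitely generated. By Alexander's subbasis theorem, a family of subbasic closed sets with the finite intersection property and empty intersection contradicts quasi-compactness of $\mathrm{Fgn}(R)$, so (i) fails.

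The argument is essentially routine, and I do not anticipate a real obstacle; the only two points needing a moment's care are ensuring, in the finite-intersection step, that the finite sums $(a_1,\dots,a_n)$ remain \emph{proper} (automatic, as they sit inside the maximal ideal $\mathfrak m$), and noting, in the emptiness step, that $\mathfrak m$ is the \emph{unique} ideal that could belong to $\bigcap\mathcal G$, so that its not being finitely generated is exactly what forces the intersection to be empty.
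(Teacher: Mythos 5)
Your proposal is correct and follows essentially the same route as the paper: (ii)$\Rightarrow$(i) by Corollary \ref{comp}, and (i)$\Rightarrow$(ii) by contraposition using the same family $\mathcal G=\{\mathrm{Fgn}(R)\cap\{aR\}^\uparrow\mid a\in\mathfrak m\}$ with the finite intersection property and empty intersection. The only (immaterial) differences are that you spell out the finite-intersection step, derive the emptiness via $\mathfrak b=\mathfrak m$ rather than via $\mathfrak b\supsetneq\mathfrak m\Rightarrow\mathfrak b=R$, and cite Alexander's subbasis theorem where the elementary direction of the finite-intersection criterion already suffices.
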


\begin{proof}
It is clear that (ii) implies (i), in view of Corollary \ref{comp}.
Conversely, assume that there exists a maximal ideal $\mathfrak m$ of $R$ that is not finitely generated, and consider the collection of closed subspaces $$\mathcal G:=\left\{\{aR\}^\uparrow\cap \mathrm{Fgn}(R)\mid a\in \mathfrak m \right\}.$$  Clearly $\mathcal G$ has the finite intersection property, but $\cap \mathcal G=\emptyset$: indeed, if there exists an ideal $\mathfrak b\in \cap \mathcal G$, then $\mathfrak b$ is finitely generated and contains $\mathfrak m$. Since $\mathfrak m$ is not finitely generated, $\mathfrak b\supsetneq \mathfrak m$ and thus $\mathfrak b=R$, against the fact that $\mathrm{Fgn}(R)$ consists of proper ideals.  
\end{proof}

\begin{exam}
\emph{We now observe that $\mathrm{Fgn}(R)$ can fail to be sober. Let $p$ be any prime number and let $R:=\mathds Z_{(p)}+T\mathds Q[T]_{(T)}$, where $T$ is an indeterminate over $\mathds Q$. Then $R$ is a two-dimensional valuation domain and $$\mathrm{Spec}(R)=\{(0), \mathfrak p:=T\mathds Q[T]_{(T)}, \mathfrak m:=pR\}.$$ It is well known that $\mathfrak p$ is not a finitely generated ideal of $R$ (see e.g. \cite[Theorem 17.3(a)]{GI72}). Now consider the nonempty closed subset $C:=\{\mathfrak p\}^\uparrow\cap \mathrm{Fgn}(R)$ and notice that $C$ is irreducible. Indeed, if $$U:=\mathrm{Fgn}(R)\setminus \{\mathfrak a\}^\uparrow, U'=\mathrm{Fgn}(R)\setminus \{\mathfrak b\}^\uparrow$$ are subbasic open sets of $\mathrm{Fgn}(R)$ ($\mathfrak a,\mathfrak b$ are ideals of $R$) and $U\cap C,U'\cap C\neq \emptyset$, then $U\cap U'\cap C\neq \emptyset$, because $U,U'$ are comparable since all ideals of $R$ are comparable. Since every nonzero non-maximal prime ideal of a valuation domain is divisorial (see e.g. \cite[Corollary 4.1.13]{fontana_libro}), $\mathfrak{p}$ is the intersection of all principal ideals containing it. In particular, $\sqrt[\mathrm{Fgn}(R)]{\mathfrak p}=\mathfrak p\notin \mathrm{Fgn}(R)$. By Theorem \ref{sober}, $\mathrm{Fgn}(R)$ is not sober. }
\end{exam}

\subsection{Nilpotent ideals.} 
Recall that an ideal $\mathfrak a$ of a ring $R$ is \emph{nilpotent} if $\mathfrak a^k=0$ for some positive integer $k$. The following example will show that the space $\mathrm{Nip}(R)$ of nilpotent ideals of $R$ can easily fail to be quasi-compact.

\begin{exam}
\emph{Let us consider the ring $R:=\prod_{i\geqslant 2}\mathds Z/2^i\mathds Z$ and set $$f_1:=(\overline{2},0,0,0,\ldots), \; f_2:=(\overline{2},\overline{2},0,0,\ldots), \; f_3:=(\overline{2},\overline{2},\overline{2},0,\ldots),$$ and so on. For every positive integer $i$, consider the nilpotent ideal $\mathfrak a_i:=f_iR$ (notice that $\mathfrak a_i^{i+1}=0$ and that $\mathfrak a_i^i\neq 0$). Thus we get the ascending chain $\mathfrak a_1\subset \mathfrak a_2\subset \mathfrak a_3\subset \cdots$ in the space $\mathrm{Nip}(R)$ and such a chain has no upper bounds: indeed, if $\mathfrak b$ is any nilpotent ideal and $k$ is the minimum positive integer $n$ such that $\mathfrak b^n=0$ then $\mathfrak a_k\nsubseteq \mathfrak b$, since $f_k^k\neq 0$. Then the conclusion immediately follows from Lemma \ref{upper-bound}}.  
\end{exam}

\subsection{Regular ideals.}
Recall that an ideal of a ring $R$ is \emph{regular} if it contains a regular element, i.e., an element that is not a zero-divisor of $R$. Let $\mathrm{Reg}(R)$ denote the subspace of $\mathrm{Idl}(R)$ consisting of all proper regular ideals. First notice that $\mathrm{Reg}(R)$ is closed under specialization in the spectral space $\mathrm{Prp}(R)$. 

\begin{prop}
Let $R$ be a ring satisfying at least one of the following conditions. 
\begin{enumerate} [\upshape (i)] 
\item $R$ is Noetherian. 

\item $R$ is local.

\item Every maximal ideal of $R$ is regular. 
\end{enumerate}
Then $\mathrm{Reg}(R)$ is quasi-compact. 
\end{prop}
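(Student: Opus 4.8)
The plan is to treat the three hypotheses one at a time, reducing each to a result already available in the paper. The guiding observation is that, apart from one degenerate situation, we will be able to guarantee $\Max(R)\subseteq\mathrm{Reg}(R)$, and then Corollary \ref{comp} finishes the job; hypotheses (i)--(iii) are precisely what is needed to either force this inclusion or make $\mathrm{Reg}(R)$ empty.

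For hypothesis (iii), I would argue as follows: every maximal ideal $\mathfrak{m}$ of $R$ is proper and, by assumption, contains a regular element, so $\mathfrak{m}\in\mathrm{Reg}(R)$. Hence $\Max(R)\subseteq\mathrm{Reg}(R)$, and Corollary \ref{comp} gives that $\mathrm{Reg}(R)$ is quasi-compact. For hypothesis (i), I would invoke Proposition \ref{noetherian} to get that $\mathrm{Idl}(R)$ is a Noetherian space; since every subspace of a Noetherian space is quasi-compact (indeed, the quasi-compactness of every subset of $\mathrm{Idl}(R)$ is part of what is shown in the proof of Proposition \ref{noetherian} when $R$ is Noetherian), the subspace $\mathrm{Reg}(R)$ is quasi-compact.

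For hypothesis (ii), let $\mathfrak{m}$ be the unique maximal ideal of $R$ and split into two cases. If $\mathfrak{m}$ is regular, then $\Max(R)=\{\mathfrak{m}\}\subseteq\mathrm{Reg}(R)$ and one concludes exactly as in case (iii). If $\mathfrak{m}$ is not regular, then $\mathfrak{m}$ contains no regular element, i.e.\ every element of $\mathfrak{m}$ is a zero-divisor of $R$; since $R$ is local, every proper ideal of $R$ is contained in $\mathfrak{m}$, hence consists of zero-divisors and so is not regular. As $\mathrm{Reg}(R)$ consists of proper regular ideals, this means $\mathrm{Reg}(R)=\emptyset$, which is trivially quasi-compact.

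I do not expect a genuine obstacle. The only points requiring a little care lie in case (ii): one must unwind that ``$\mathfrak{m}$ not regular'' amounts to every element of $\mathfrak{m}$ being a zero-divisor and then propagate this to all proper ideals via locality, and one must be content to treat the empty space as quasi-compact. It is perhaps worth recording why a single uniform proof --- via the quasi-compactness criterion stated just before Corollary \ref{comp}, i.e.\ showing that every proper regular ideal lies below a maximal element of $\mathrm{Reg}(R)$ and that $\Max(\mathrm{Reg}(R))$ is quasi-compact --- does not work in general: the natural Zorn's Lemma step breaks down because an ascending union of proper regular ideals can be all of $R$. Conditions (i)--(iii) are exactly the hypotheses that sidestep this, which is why the case analysis is the natural approach.
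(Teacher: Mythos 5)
Your proof is correct and follows essentially the same route as the paper's: case (i) via Proposition \ref{noetherian}, case (iii) via Corollary \ref{comp}, and case (ii) split according to whether the maximal ideal is regular (reducing to Corollary \ref{comp}) or consists of zero-divisors (forcing $\mathrm{Reg}(R)=\emptyset$). Your unwinding of the local case and the closing remark on why a uniform argument fails are sound but add nothing beyond the paper's argument.
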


\begin{proof}
Cases (i) and (iii) immediately follows from Proposition \ref{noetherian} and Corollary  \ref{comp}, respectively. Now suppose $R$ is local with maximal ideal $\mathfrak m$. If $\mathfrak m$ is regular, the conclusion follows again by Corollary \ref{comp}. In case $\mathfrak m$ consists of zero-divisors, every regular element is invertible, and thus $\mathrm{Reg}(R)=\emptyset$ is quasi-compact.  
\end{proof}

We now give two example showing that $\mathrm{Reg}(R)$ can fail to be quasi-compact.
\begin{exam}
\emph{Let $D$ be a one-dimensional domain such that $\mathrm{Spec}(D)$ is non-Noetherian, and let $\mathfrak m_\infty$ be a maximal ideal of $D$ that is not the radical of any finitely generated ideal of $D$; for example, $D$ may be any almost Dedekind domain that is not Noetherian (see e.g. \cite{loper-almostdedekind} for several constructions of this kind of rings). Let $K:=D/\mathfrak m_\infty$, consider the $D$-module  $X:=K^{(\mathds N)}$ and let $R:=D\times X$ endowed with the following multiplication: 
$$
(a,k)(b,l):=(ab,al+bk+kl),
$$
for every $(a,k),(b,l)\in R$. Then $\widetilde{\mathfrak m_\infty}:=\mathfrak m_\infty\times X$ is a maximal ideal of $R$ consisting of zero-divisors, by \cite[Theorem 8.3(f)]{LU2005}. Now consider elements $(a_1,\varphi_1),\ldots (a_n,\varphi_n)\in \widetilde{\mathfrak m_\infty}$. By assumption, there exist a maximal ideal $\mathfrak n\neq \mathfrak{m}_\infty$ such that $a_1,\ldots,a_n\in \mathfrak n$. In particular, the elements $(a_1,\varphi_1),\ldots (a_n,\varphi_n)$ belong to the maximal ideal $\widetilde{\mathfrak n}:=\mathfrak n\times X$ of $R$ and the fact that $\mathfrak m\neq \mathfrak n$  implies that $\widetilde{\mathfrak n}$ is regular, again by \cite[Theorem 8.3(f)]{LU2005}. 
It immediately follows that the space ${\rm Reg}(R)$ of regular proper ideals of $R$ is not quasi-compact. Indeed, the collection of closed sets $$\mathcal G:=\{{\rm Reg}(R)\cap \{fR\}^\uparrow\mid f\in \widetilde{\mathfrak m_\infty} \}$$ has the finite intersection property and empty intersection. }
\end{exam}

\begin{exam}
\emph{Let $D$ be a one-dimensional domain such that $\mathrm{Spec}(D)$ is non-Noetherian, and let $\mathfrak m_\infty$ be a maximal ideal of $D$ that is not the radical of any finitely generated ideal of $D$. Let $R:=D[X]/(X\mathfrak{m}_\infty)$, and let $\pi:D[X]\to R$ be the quotient map.}
 
\emph{Consider the collection $$\mathcal G:=\left\{\mathrm{Reg}(R)\cap \{\pi(f)R\}^\uparrow\mid f\in \mathfrak{m}_\infty[X]\} \right\}\cup\left\{\mathrm{Reg}(R)\cap \{\pi(X)R\}^\uparrow\right\}$$ of closed subsets of $\mathrm{Reg}(R)$. The intersection of all elements of $\mathcal{G}$ is empty: indeed, if $\mathfrak{a}$ contains all $\pi(f)$ and $\pi(X)$, then it must be $\mathfrak{b}:=\pi(X,\mathfrak{m}_\infty)$, which is a maximal ideal containing only zero-divisors. On the other hand, if $\mathcal{G}'$ is a finite subset of $\mathcal{G}$, say $$\mathcal{G}':=\left\{\mathrm{Reg}(R)\cap \{\pi(f_i)R\}^\uparrow\mid i=1,\ldots,n\right\}\cup\left\{\mathrm{Reg}(R)\cap \{\pi(X)R\}^\uparrow\right\},$$ then there is an ideal $\mathfrak{n}$ of $D$ containing $f_1,\ldots,f_n$, and thus $\cap\mathcal{G}'$ contains the ideal $\pi((\mathfrak{n},X))$, which is regular (every $g\in\mathfrak{n}\setminus\mathfrak{m}_\infty$ becomes regular in $R$). Hence, $\mathrm{Reg}(R)$ is not quasi-compact.}
\end{exam}

\begin{exam}
\emph{We now prove that $\mathrm{Reg}(R)$ can fail to be sober. Clearly if $R=\mathds Z$ then $\mathrm{Reg}(R)=\mathrm{Prp}(R)\setminus\{(0)\}$. If $n,m$ are nonzero integers and $p$ is a prime number that does neither divide $n$ nor $m$, then $$p\mathds Z\in \mathrm{Reg}(R)\setminus(\{n\mathds Z\}^\uparrow\cup\{m\mathds Z\}^\uparrow).$$ This proves that $\mathrm{Reg}(R)$ is an irreducible space. Since clearly $$\sqrt[\mathrm{Reg}(R)]{(0)}=(0) \notin \mathrm{Reg}(R),$$ from Theorem \ref{sober} we immediately infer that $\mathrm{Reg}(R)$ is not a sober space.}
\end{exam}

\subsection{Primary ideals}
\begin{lem}\label{primary-dim0-irrid}
Let $R$ be a zero-dimensional ring that is not local. Then $\mathrm{Prm}(R)$ is not irreducible.
\end{lem}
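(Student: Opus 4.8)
The plan is to exhibit two closed subsets $V_1,V_2$ of $\mathrm{Prm}(R)$, each properly contained in $\mathrm{Prm}(R)$, with $\mathrm{Prm}(R)=V_1\cup V_2$; this is precisely a failure of irreducibility. (If $R$ is the zero ring the assertion holds trivially, so I assume $R$ has two distinct maximal ideals $\mathfrak m_1\ne\mathfrak m_2$; in particular $\mathrm{Prm}(R)\ne\emptyset$.) The mechanism is a nontrivial idempotent: the first step is to produce an idempotent $e\in R$ with $e\in\mathfrak m_1$ and $e\notin\mathfrak m_2$.

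To build such an $e$, I would pick $a\in\mathfrak m_1\setminus\mathfrak m_2$ and use the standard fact that, $R$ being zero-dimensional, every element satisfies $a^{n}=a^{n+1}b$ for suitable $n\ge 1$ and $b\in R$. A short computation then shows that $e:=a^{n}b^{n}$ is idempotent, that $e\in aR\subseteq\mathfrak m_1$, and that $a^{n}e=a^{n}$; the last identity forces $e\notin\mathfrak m_2$ (otherwise $a^{n}\in\mathfrak m_2$, hence $a\in\mathfrak m_2$ since $\mathfrak m_2$ is prime). Thus $e$ is a nontrivial idempotent. Alternatively, one can note that $R_{\mathrm{red}}=R/\sqrt{(0)}$ is von Neumann regular, so $\mathrm{Spec}(R)\cong\mathrm{Spec}(R_{\mathrm{red}})$ is a disconnected Boolean space, and lift a nontrivial idempotent of $R_{\mathrm{red}}$ through the nilradical.

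Granting $e$, the heart of the argument is the observation that \emph{every} primary ideal $\mathfrak q$ of $R$ contains $e$ or $1-e$: indeed $e(1-e)=0\in\mathfrak q$, so by primariness either $e\in\mathfrak q$ or $(1-e)^{m}\in\mathfrak q$ for some $m\ge 1$, and $(1-e)^{m}=1-e$. Setting
$V_1:=\mathrm{Prm}(R)\cap\{eR\}^\uparrow$ and $V_2:=\mathrm{Prm}(R)\cap\{(1-e)R\}^\uparrow$ — both closed in $\mathrm{Prm}(R)$ — this gives $\mathrm{Prm}(R)=V_1\cup V_2$.

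It remains to see that neither $V_i$ is the whole space. Since $\mathfrak m_2$ is prime, hence primary, and $e\notin\mathfrak m_2$, we get $\mathfrak m_2\in\mathrm{Prm}(R)\setminus V_1$; and since $e\in\mathfrak m_1$ while $\mathfrak m_1\ne R$, we have $1-e\notin\mathfrak m_1$, so $\mathfrak m_1\in\mathrm{Prm}(R)\setminus V_2$. Hence $V_1,V_2\subsetneq\mathrm{Prm}(R)$ and $\mathrm{Prm}(R)=V_1\cup V_2$, so $\mathrm{Prm}(R)$ is not irreducible. The only genuinely non-formal ingredient is the first step, the extraction of a nontrivial idempotent from zero-dimensionality; after that, everything is bookkeeping with a single idempotent, the key computation being that it cannot be avoided by any primary ideal.
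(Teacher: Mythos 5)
Your proof is correct and is essentially the paper's argument: the paper writes $R\cong R_1\times R_2$ and covers $\mathrm{Prm}(R)$ by the primary ideals containing $(1,0)$ and those containing $(0,1)$, which is exactly your cover by $\{eR\}^\uparrow$ and $\{(1-e)R\}^\uparrow$ for the corresponding complementary idempotents. The only difference is how the idempotent is produced -- you extract it element-theoretically from the $\pi$-regularity characterization $a^n=a^{n+1}b$ of zero-dimensional rings, whereas the paper obtains it from the total disconnectedness of $\mathrm{Spec}(R)$ (Zariski $=$ constructible topology in dimension zero) together with the correspondence between disconnections of $\mathrm{Spec}(R)$ and product decompositions of $R$; both routes are standard and your version is, if anything, more self-contained.
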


\begin{proof}
Since $R$ is zero-dimensional, the Zariski topology and the constructible topology on ${\rm Spec}(R)$  are the same topology and thus ${\rm Spec}(R)$ is totally disconnected, by \cite[Chapter 3, Exercises 11, 28 and 30]{atiyah}. Hence, since $R$ is not local, ${\rm Spec}(R)$ is not connected and thus, in view of 
 \cite[Chapter 1, Exercise 22]{atiyah},
there are nontrivial rings $R_1,R_2$ such that $R$ is isomorphic to the direct product $R_1\times R_2$. In the latter, every primary ideal contains either $(1,0)$ or $(0,1)$, and thus $$\mathrm{Prm}(R_1\times R_2)=(\{(1,0)\}^\uparrow\cap\mathrm{Prm}(R_1\times R_2))\cup(\{(0,1)\}^\uparrow\cap\mathrm{Prm}(R_1\times R_2)).$$ Hence $\mathrm{Prm}(R)$ is not irreducible.
\end{proof}

\begin{prop}\label{sober-dim1}
Let $R$ be a ring that is either:
\begin{itemize}

\item[$\bullet$] a zero-dimensional ring;
\item[$\bullet$] a one-dimensional integral domain.
\end{itemize}
Then $\mathrm{Prm}(R)$ is sober. 
\end{prop}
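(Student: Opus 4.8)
The plan is to verify the criterion of Theorem~\ref{sober}: we must show that whenever $\mathfrak{a}$ is an ideal of $R$ for which $\mathrm{Prm}(R)\cap\{\mathfrak{a}\}^\uparrow$ is irreducible, then $\sqrt[\mathrm{Prm}(R)]{\mathfrak{a}}\in\mathrm{Prm}(R)$. Since an irreducible set is nonempty and $R$ itself is not primary, such an $\mathfrak{a}$ must be a proper ideal. The first step is a reduction to the zero-dimensional situation. If $R$ is a one-dimensional domain and $\mathfrak{a}=(0)$, then $(0)$ is prime, hence primary, so $(0)\in\mathrm{Prm}(R)\cap\{\mathfrak{a}\}^\uparrow$ and therefore $\sqrt[\mathrm{Prm}(R)]{(0)}=(0)\in\mathrm{Prm}(R)$; this case is done. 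In every remaining case $R/\mathfrak{a}$ is a nonzero zero-dimensional ring: this is clear if $R$ is zero-dimensional, and if $R$ is a one-dimensional domain and $\mathfrak{a}\neq(0)$ then every prime of $R$ containing $\mathfrak{a}$ is nonzero, hence maximal.

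The second step records that the order isomorphism $\mathfrak{b}/\mathfrak{a}\mapsto\mathfrak{b}$ between $\mathrm{Idl}(R/\mathfrak{a})$ and $\{\mathfrak{a}\}^\uparrow$ is a homeomorphism for the respective coarse lower topologies and restricts to a homeomorphism $\mathrm{Prm}(R/\mathfrak{a})\to\mathrm{Prm}(R)\cap\{\mathfrak{a}\}^\uparrow$. This is routine: $\{\mathfrak{c}\}^\uparrow\cap\{\mathfrak{a}\}^\uparrow=\{\mathfrak{c}+\mathfrak{a}\}^\uparrow$ for every ideal $\mathfrak{c}$, the ideals $\mathfrak{c}+\mathfrak{a}$ run over all ideals containing $\mathfrak{a}$, and primariness is preserved by the correspondence between ideals of $R/\mathfrak{a}$ and ideals of $R$ containing $\mathfrak{a}$. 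In particular $\mathrm{Prm}(R/\mathfrak{a})$ is irreducible.

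The third step invokes Lemma~\ref{primary-dim0-irrid} in contrapositive form: since $R/\mathfrak{a}$ is zero-dimensional with irreducible space of primary ideals, it must be local. In a nonzero zero-dimensional local ring the unique maximal ideal is the only prime, so the radical of any proper ideal equals that maximal ideal, whence every proper ideal is primary; applying this to $(0)$ in $R/\mathfrak{a}$ shows $\mathfrak{a}\in\mathrm{Prm}(R)$, and in fact $\mathrm{Prm}(R)\cap\{\mathfrak{a}\}^\uparrow$ coincides with the set of all proper ideals of $R$ containing $\mathfrak{a}$. The intersection of this family is $\mathfrak{a}$ itself, so $\sqrt[\mathrm{Prm}(R)]{\mathfrak{a}}=\mathfrak{a}\in\mathrm{Prm}(R)$, and Theorem~\ref{sober} yields that $\mathrm{Prm}(R)$ is sober.

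Modulo Theorem~\ref{sober} and Lemma~\ref{primary-dim0-irrid}, the argument is mostly bookkeeping, and the only delicate points are organizational: separating off the genuinely different case $\mathfrak{a}=(0)$ for a one-dimensional domain (which has no zero-dimensional quotient to pass to), and checking that irreducibility of $\mathrm{Prm}(R)\cap\{\mathfrak{a}\}^\uparrow$ really does descend to $\mathrm{Prm}(R/\mathfrak{a})$, i.e., that the coarse lower topology is compatible with passing to a quotient ring.
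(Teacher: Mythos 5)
Your proof is correct and follows essentially the same route as the paper's: reduce to the quotient $R/\mathfrak{a}$, identify $\{\mathfrak{a}\}^\uparrow\cap\mathrm{Prm}(R)$ with $\mathrm{Prm}(R/\mathfrak{a})$, and combine Lemma~\ref{primary-dim0-irrid} with Theorem~\ref{sober}. The only difference is presentational: you run the key step in contrapositive form (irreducible $\Rightarrow$ local $\Rightarrow$ $\mathfrak{a}$ primary) and make explicit the edge cases ($\mathfrak{a}=(0)$ in the domain case and the local zero-dimensional quotient) that the paper leaves implicit.
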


\begin{proof}
Let $\mathfrak a$ be a non-primary ideal of $R$. Then, $R':=R/\mathfrak{a}$ is zero-dimensional under both hypothesis (if $R$ is a one-dimensional domain, $(0)$ is primary), and the quotient map $R\to R'$ induces a homeomorphism $\{\mathfrak{a}\}^\uparrow\cap\mathrm{Prm}(R)\to\mathrm{Prm}(R')$; by Lemma \ref{primary-dim0-irrid}, the latter is not irreducible, and thus neither $\{\mathfrak{a}\}^\uparrow\cap\mathrm{Prm}(R)$ is irreducible.
Therefore, if $\{\mathfrak{a}\}^\uparrow\cap\mathrm{Prm}(R)$ is irreducible then $\mathfrak{a}$ is primary; thus, $\sqrt[\mathrm{Prm}(R)]{\mathfrak{a}}=\mathfrak{a}\in\mathrm{Prm}(R)$. By Theorem \ref{sober}, $\mathrm{Prm}(R)$ is sober.
\end{proof}

\begin{lem}\label{prm-localization}
Let $R$ be a ring such that $\mathrm{Prm}(R)$ is sober. Then $\mathrm{Prm}(R_{\mathfrak{ m}})$ is sober, for every maximal ideal $\mathfrak m$ of $R$. 
\end{lem}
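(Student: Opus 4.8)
The plan is to transport soberness from $\mathrm{Prm}(R)$ to $\mathrm{Prm}(R_{\mathfrak m})$ along the localization map $R\to R_{\mathfrak m}$, after isolating one general topological fact: \emph{a subspace $Y$ of a sober space $X$ that is closed under generization is itself sober}. To see this, let $C\subseteq Y$ be irreducible and closed in $Y$. Its closure $\overline{C}$ in $X$ is again irreducible, so by sobriety of $X$ there is $x\in X$ with $\overline{C}=\overline{\{x\}}$. Picking any $c\in C$, we have $c\in\overline{\{x\}}$, and since $Y$ is closed under generization this forces $x\in Y$; as $C$ is closed in $Y$ we get $C=\overline{C}\cap Y=\overline{\{x\}}\cap Y$, which is exactly the closure of $\{x\}$ in $Y$. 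Hence $x$ is a generic point of $C$ in $Y$, and $Y$ is sober.

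Next I would identify $\mathrm{Prm}(R_{\mathfrak m})$, as a subspace of $\mathrm{Idl}(R_{\mathfrak m})$, with the subspace $Y:=\{\mathfrak q\in\mathrm{Prm}(R)\mid \mathfrak q\subseteq\mathfrak m\}$ of $\mathrm{Idl}(R)$. On the level of sets this is the classical bijection between primary ideals of $R_{\mathfrak m}$ and those primary ideals of $R$ that meet $R\setminus\mathfrak m$ trivially (equivalently, that are contained in $\mathfrak m$, since $\mathfrak q\subseteq\mathfrak m$ iff $\sqrt{\mathfrak q}\subseteq\mathfrak m$), realized by extension $\mathfrak q\mapsto\mathfrak q R_{\mathfrak m}$ and contraction $\mathfrak Q\mapsto\mathfrak Q\cap R$. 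Here one uses that a primary ideal $\mathfrak q\subseteq\mathfrak m$ is saturated with respect to $R\setminus\mathfrak m$ (if $sx\in\mathfrak q$ with $s\notin\mathfrak m\supseteq\sqrt{\mathfrak q}$, then $x\in\mathfrak q$), that every primary ideal of the local ring $R_{\mathfrak m}$ is contained in $\mathfrak m R_{\mathfrak m}$ and hence contracts into $\mathfrak m$, and that contractions of primary ideals are primary. This bijection is a homeomorphism for the coarse lower topologies: contraction is continuous because the preimage of the subbasic closed set $\{\mathfrak a\}^\uparrow$ is $\{\mathfrak a R_{\mathfrak m}\}^\uparrow$, while for the inverse one checks, using saturation, that the preimage of $\{\mathfrak A\}^\uparrow\cap\mathrm{Prm}(R_{\mathfrak m})$ under $\mathfrak q\mapsto\mathfrak q R_{\mathfrak m}$ equals $\{\mathfrak A\cap R\}^\uparrow\cap Y$.

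To conclude, recall that the specialization order of the coarse lower topology is inclusion, so a generization of $\mathfrak q$ is an ideal contained in $\mathfrak q$; thus $Y$ is closed under generization inside $\mathrm{Prm}(R)$, since any primary ideal contained in some $\mathfrak q\in Y$ is contained in $\mathfrak m$. As $\mathrm{Prm}(R)$ is sober by hypothesis, the general lemma above shows $Y$ is sober, and therefore so is the homeomorphic space $\mathrm{Prm}(R_{\mathfrak m})$. The one delicate point is the identification step: one must verify that the extension/contraction correspondence is a genuine homeomorphism and not merely an order isomorphism, the crux being that primary ideals contained in $\mathfrak m$ are saturated with respect to $R\setminus\mathfrak m$ — this is what makes contraction a two-sided inverse of extension on the relevant classes.

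Alternatively, the general lemma can be bypassed in favour of a direct appeal to Theorem~\ref{sober}: transporting the hypothesis via the identification step, $Y\cap\{\mathfrak a\}^\uparrow$ is irreducible for $\mathfrak a:=\mathfrak A\cap R$, its closure in the spectral space $\mathrm{Idl}(R)$ is $\{\mathfrak c\}^\uparrow$ with $\mathfrak c:=\bigcap(Y\cap\{\mathfrak a\}^\uparrow)$, so $\{\mathfrak c\}^\uparrow\cap\mathrm{Prm}(R)$ is irreducible and Theorem~\ref{sober} forces $\sqrt[\mathrm{Prm}(R)]{\mathfrak c}=\mathfrak c$ to lie in $\mathrm{Prm}(R)$; since localization commutes with this intersection and $\mathfrak c$ is saturated, $\mathfrak c R_{\mathfrak m}=\sqrt[\mathrm{Prm}(R_{\mathfrak m})]{\mathfrak A}$ is primary, and Theorem~\ref{sober} applied to $\mathrm{Prm}(R_{\mathfrak m})$ then gives the claim.
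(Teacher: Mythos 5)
Your main argument is correct, and it takes a genuinely different route from the paper's. Both proofs rest on the same identification of $\mathrm{Prm}(R_{\mathfrak m})$ with $Y:=\{\mathfrak q\in\mathrm{Prm}(R)\mid \mathfrak q\subseteq\mathfrak m\}$; the paper dismisses this as ``immediate,'' whereas you correctly isolate its crux, namely that a primary ideal contained in $\mathfrak m$ is saturated with respect to $R\setminus\mathfrak m$, which is what makes extension and contraction mutually inverse homeomorphisms. Where you diverge is in how sobriety is transferred: the paper verifies the criterion of Theorem~\ref{sober} for $Y$ directly --- given an irreducible set $Y\cap\{\mathfrak i\}^\uparrow$, it takes its closure $\Gamma$ in $\mathrm{Prm}(R)$, uses sobriety of $\mathrm{Prm}(R)$ to produce a generic point $\mathfrak a_0$, and then checks by a two-sided inclusion argument that $\sqrt[Y]{\mathfrak i}=\mathfrak a_0\in Y$. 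You instead invoke the general topological fact that a generization-closed subspace of a sober space is sober, together with the observation that the specialization order of the coarse lower topology is inclusion, so that $Y$ (a down-set inside $\mathrm{Prm}(R)$) is generization-closed. Your lemma is standard and your verification of it is correct, so this buys a cleaner, more modular proof that makes the topological mechanism transparent and reusable; the paper's version has the advantage of staying entirely within its own ``radical-like'' framework of Theorem~\ref{sober} and of not requiring any auxiliary lemma. Your alternative sketch at the end is essentially the paper's argument (with the generic point extracted from sobriety of $\mathrm{Idl}(R)$ rather than of $\mathrm{Prm}(R)$), and the point you flag there --- that the infinite intersection defining $\sqrt[Y]{\mathfrak i}$ commutes with localization because the ideals involved are saturated --- is indeed the step that needs the saturation property a second time.
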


\begin{proof}
Given a maximal ideal $\mathfrak{ m}$ of $R$, it is immediate that the localization mapping $R\to R_{\mathfrak m}$ induces a homeomorphism of $\mathrm{Prm}(R_{\mathfrak m})$ and $X:=\{\mathfrak a \in \mathrm{Prm}(R)\mid \mathfrak a \subseteq \mathfrak m \}$. Take an ideal $\mathfrak{ i}$ of $R$ such that $X\cap \{ \mathfrak i\}^\uparrow$ is irreducible. Then the closure $\Gamma$ of $X\cap \{ \mathfrak i\}^\uparrow$ in $\mathrm{Prm}(R)$ is irreducible too and thus, by assumption, there exists a primary ideal $\mathfrak a_0$ of $R$ such that $\Gamma=\{\mathfrak a_0\}^\uparrow\cap \mathrm{Prm}(R)$. The inclusion $X\cap \{\mathfrak i\}^\uparrow\subseteq \Gamma$ immediately implies that $\mathfrak m\supseteq \sqrt[X]{\mathfrak i}\supseteq \mathfrak a_0$ (in particular, $\mathfrak a_0\in X$). Conversely, take an element $\alpha\in \sqrt[X]{\mathfrak i}$. Then $C:=\{\alpha R\}^\uparrow\cap {\rm Prm}(R)$ is a closed set of ${\rm Prm}(R)$ containing $X\cap \{ \mathfrak i\}^\uparrow$ and thus $C$ contains $\Gamma$. In particular, $\alpha\in \mathfrak{a}_0$. This proves that $\sqrt[X]{\mathfrak i}=\mathfrak{a}_0$ and thus the conclusion follows from Theorem \ref{sober}. 
\end{proof}

\begin{prop}\label{prm-sober-local}
Let $R$ be a Noetherian local ring. Then ${\rm Prm}(R)$ is sober if and only if ${\rm Prm}(R)={\rm Prp}(R)$. 
\end{prop}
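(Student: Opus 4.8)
\emph{Overall strategy.} I would prove the two implications separately; the reverse one is immediate and all the substance is in the forward one. If $\mathrm{Prm}(R)=\mathrm{Prp}(R)$, then as a subspace of $\mathrm{Idl}(R)$ the space $\mathrm{Prm}(R)$ coincides with $\mathrm{Prp}(R)$, which is proconstructible in $\mathrm{Idl}(R)$ (as recalled in Section~3), hence spectral, hence sober; neither hypothesis on $R$ is needed here. So assume conversely that $\mathrm{Prm}(R)$ is sober, write $\mathfrak m$ for the maximal ideal, and fix an arbitrary proper ideal $\mathfrak a$; since primary ideals are proper, it suffices to show $\mathfrak a$ is primary. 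Because $R$ is Noetherian, $\mathfrak a$ has a primary decomposition, and every primary component contains $\mathfrak a$, so $\sqrt[\mathrm{Prm}(R)]{\mathfrak a}$ — the intersection of all primary ideals containing $\mathfrak a$ — equals $\mathfrak a$. By Theorem~\ref{sober} it is therefore enough to prove that $\mathrm{Prm}(R)\cap\{\mathfrak a\}^\uparrow$ is irreducible: sobriety will then force $\mathfrak a=\sqrt[\mathrm{Prm}(R)]{\mathfrak a}\in\mathrm{Prm}(R)$.

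\emph{The auxiliary set and its irreducibility.} I would introduce $M:=\{\mathfrak q\in\mathrm{Idl}(R)\mid \mathfrak a\subseteq\mathfrak q \text{ and } \sqrt{\mathfrak q}=\mathfrak m\}$, the set of $\mathfrak m$-primary ideals containing $\mathfrak a$ (recall that an ideal whose radical is a maximal ideal is primary, so $M\subseteq\mathrm{Prm}(R)$; moreover $\mathfrak m\in M$). Since $\sqrt{\mathfrak q_1\cap\mathfrak q_2}=\sqrt{\mathfrak q_1}\cap\sqrt{\mathfrak q_2}=\mathfrak m$, the set $M$ is closed under finite intersections, hence lower directed; and any lower directed subset of the coarse lower topology is irreducible, because every closed set is upward closed (being an intersection of finite unions of sets $\{\mathfrak b\}^\uparrow$), so the usual two-open-set test for irreducibility is settled by choosing a common lower bound and applying upward closedness. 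Thus $\overline M$ is irreducible.

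\emph{Transferring irreducibility to $\mathrm{Prm}(R)\cap\{\mathfrak a\}^\uparrow$.} The key step is to show $\mathrm{Prm}(R)\cap\{\mathfrak a\}^\uparrow\subseteq\overline M$; since $M\subseteq\mathrm{Prm}(R)\cap\{\mathfrak a\}^\uparrow$, this forces the two sets to have the same (irreducible) closure, and we are done. Given a primary ideal $\mathfrak q\supseteq\mathfrak a$ and a basic open neighbourhood of $\mathfrak q$ in $\mathrm{Prm}(R)$, cut out by ideals $\mathfrak b_1,\dots,\mathfrak b_n$ with $\mathfrak b_j\not\subseteq\mathfrak q$, I would pick $\beta_j\in\mathfrak b_j\setminus\mathfrak q$, so the class $\overline{\beta_j}$ in $R/\mathfrak q$ is nonzero. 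Applying the Krull intersection theorem in the Noetherian local ring $R/\mathfrak q$ yields a $t$ with $\overline{\beta_j}\notin(\mathfrak m/\mathfrak q)^t$ for every $j$; then $\mathfrak q':=\mathfrak q+\mathfrak m^t$ satisfies $\mathfrak a\subseteq\mathfrak q'\subseteq\mathfrak m$ and $\sqrt{\mathfrak q'}=\mathfrak m$, so $\mathfrak q'\in M$, while $\beta_j\notin\mathfrak q'$ for all $j$, so $\mathfrak q'$ lies in the chosen neighbourhood. Hence $\mathfrak q\in\overline M$, as needed.

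\emph{Where the difficulty lies.} The main obstacle is precisely this last inclusion. The set $\mathrm{Prm}(R)\cap\{\mathfrak a\}^\uparrow$ is not itself lower directed (the intersection of a $\mathfrak p_1$-primary and a $\mathfrak p_2$-primary ideal with $\mathfrak p_1\neq\mathfrak p_2$ need not be primary), and a primary ideal $\mathfrak q$ with $\sqrt{\mathfrak q}\subsetneq\mathfrak m$ contains \emph{no} $\mathfrak m$-primary ideal, so it is not a specialization of any single point of $M$; one really must produce, for each finite family of ``test elements'', one $\mathfrak m$-primary ideal above $\mathfrak a$ avoiding all of them, which is exactly what the local hypothesis together with Krull's theorem provides. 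Locality is genuinely essential: for $R=\mathds Z$ one has $\mathrm{Prm}(\mathds Z)\cap\{6\mathds Z\}^\uparrow=\{2\mathds Z,3\mathds Z\}$, which is not irreducible.
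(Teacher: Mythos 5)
Your proof is correct, and it reaches the conclusion by a somewhat different route than the paper, although the algebraic engine is the same: in both arguments the decisive fact is that Krull's intersection theorem makes the $\mathfrak m$-primary ideals $\mathfrak q+\mathfrak m^t$ shrink down to $\mathfrak q$. The paper packages the topology differently: it first proves a general lemma (Proposition \ref{inf-sober-1}) asserting that a sober subset $Y$ of a spectral space contains the infimum of any lower directed subset $Z\subseteq Y$, and then applies it to the single descending chain $\{\mathfrak n^n+\mathfrak i\}_{n\geqslant 1}$ inside $\mathrm{Prm}(R)$, whose intersection is the given non-primary ideal $\mathfrak i$ — one application of Krull, in $R/\mathfrak i$ only, yields the contradiction. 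You instead invoke the sobriety criterion of Theorem \ref{sober} directly, which obliges you to prove that the \emph{entire} closed set $\mathrm{Prm}(R)\cap\{\mathfrak a\}^\uparrow$ is irreducible; this is why you must show that every primary $\mathfrak q\supseteq\mathfrak a$ lies in the closure of your directed family $M$, running the Krull argument in each quotient $R/\mathfrak q$ separately rather than once. The trade-off is that your argument is more self-contained (it replaces Proposition \ref{inf-sober-1}, and hence the appeal to the existence of infima of lower directed sets in spectral spaces, by the elementary observation that closed sets in the coarse lower topology are upward closed, so lower directed sets are irreducible), at the cost of being longer and of proving a slightly stronger density statement than is strictly needed. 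Two small remarks: your use of primary decomposition to get $\sqrt[\mathrm{Prm}(R)]{\mathfrak a}=\mathfrak a$ is correct but dispensable, since $\bigcap_{t}(\mathfrak a+\mathfrak m^t)=\mathfrak a$ already gives it; and your closing example $\mathrm{Prm}(\mathds Z)\cap\{6\mathds Z\}^\uparrow=\{2\mathds Z,3\mathds Z\}$ correctly isolates where locality enters (it is consistent with the paper, whose Theorem 4.14 shows soberness of $\mathrm{Prm}(\mathds Z)$ is nonetheless recovered because $6\mathds Z$ is then simply not a counterexample to the criterion of Theorem \ref{sober}).
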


\begin{proof}
Recall that ${\rm Prp}(R)$ is a sober space, since it is spectral. Conversely, suppose that ${\rm Prm}(R)$ is sober and assume, by contradiction, that there exists a proper non-primary ideal $\mathfrak i$ of $R$. Let $\mathfrak n$ and $\overline{\mathfrak n}$ be the maximal ideals of the local rings $R$ and $R/\mathfrak i$, respectively. Since $R/\mathfrak i$ is Noetherian, we get $\bigcap_{n\geqslant 1}\overline{\mathfrak n}^n=(0)$, that is, $\bigcap_{n\geqslant 1}(\mathfrak n^n+\mathfrak i)=\mathfrak i$. Since each ideal of the type $\mathfrak n^n+ \mathfrak i$ is $\mathfrak n$-primary and ${\rm Prm}(R)$ is sober, $\mathfrak i=\inf\{ \mathfrak n^n+ \mathfrak i\mid n\geqslant 1\}$ is primary, by virtue of Proposition \ref{inf-sober-1}, a contradiction. 
\end{proof}

\begin{cor}\label{dim0-1}
Let $R$ be a Noetherian ring. If ${\rm Prm}(R)$ is sober, then $\dim(R)\leqslant 1$. 
\end{cor}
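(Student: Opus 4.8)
The plan is to reduce the corollary to the local case settled in Proposition~\ref{prm-sober-local}, and then to convert the resulting ideal-theoretic condition into a dimension bound by elementary commutative algebra. Recall first that $\dim R=\sup\{\dim R_{\mathfrak m}\mid \mathfrak m\in\Max(R)\}$, so it is enough to bound $\dim R_{\mathfrak m}$ for each maximal ideal $\mathfrak m$. Fix such an $\mathfrak m$. Since $\mathrm{Prm}(R)$ is sober, Lemma~\ref{prm-localization} shows that $\mathrm{Prm}(R_{\mathfrak m})$ is sober as well; as $R_{\mathfrak m}$ is Noetherian and local, Proposition~\ref{prm-sober-local} gives $\mathrm{Prm}(R_{\mathfrak m})=\mathrm{Prp}(R_{\mathfrak m})$, that is, every proper ideal of $S:=R_{\mathfrak m}$ is primary. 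The whole statement is thereby reduced to the purely ring-theoretic claim: \emph{a Noetherian local ring in which every proper ideal is primary has Krull dimension at most $1$.}

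For that claim I would first observe that the prime ideals of $S$ are pairwise comparable. Indeed, if $\mathfrak p,\mathfrak q$ were incomparable primes, choose $a\in\mathfrak p\setminus\mathfrak q$ and $b\in\mathfrak q\setminus\mathfrak p$; then $ab\in\mathfrak p\cap\mathfrak q$ while neither $a$ nor $b$ lies in $\mathfrak p\cap\mathfrak q=\sqrt{\mathfrak p\cap\mathfrak q}$, so the proper ideal $\mathfrak p\cap\mathfrak q$ is not primary, a contradiction. Hence $\mathrm{Spec}(S)$ is totally ordered by inclusion, and, $S$ being Noetherian, it is a finite chain $\mathfrak p_0\subsetneq\mathfrak p_1\subsetneq\cdots\subsetneq\mathfrak p_d=\mathfrak m$, where $d=\dim S$ and $\mathfrak p_0$ is the unique minimal prime of $S$.

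It remains to exclude $d\geqslant 2$. Assuming $d\geqslant 2$, pick any $x\in\mathfrak p_2\setminus\mathfrak p_1$: such $x$ is nonzero, since it lies outside the minimal prime $\mathfrak p_0\subseteq\mathfrak p_1$, and is not a unit, since $x\in\mathfrak p_2\subseteq\mathfrak m$. Because the spectrum of $S$ is exactly the above chain, the primes containing $x$ are precisely $\mathfrak p_2,\ldots,\mathfrak p_d$, so $\mathfrak p_2$ is a minimal prime over the principal ideal $(x)$; but $\mathrm{ht}\,\mathfrak p_2=2$, contradicting Krull's Hauptidealsatz. Hence $d\leqslant 1$, i.e.\ $\dim R_{\mathfrak m}\leqslant 1$ for every maximal ideal $\mathfrak m$, and therefore $\dim R\leqslant 1$.

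I do not expect a genuine obstacle; the conceptual heart is the realization that "every proper ideal is primary'' is strong enough to collapse the spectrum to a chain, after which the Hauptidealsatz finishes everything. The only points needing care are bookkeeping ones: verifying that the localization hypothesis of Lemma~\ref{prm-localization} and the Noetherian--local hypothesis of Proposition~\ref{prm-sober-local} are both in force for $S=R_{\mathfrak m}$, that $\mathfrak p_0$ is well defined once $\mathrm{Spec}(S)$ is known to be a chain, and that the chosen $x$ is a nonzero non-unit so that Krull's theorem applies. As an alternative to the last paragraph one could imitate the argument of Proposition~\ref{prm-sober-local} inside $S/\mathfrak p_0$, using that the powers (or symbolic powers) of the height-one prime would have to be primary; but the Hauptidealsatz route is shorter.
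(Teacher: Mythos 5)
Your proof is correct, and it follows the paper's reduction exactly up to the point where the algebra has to be done: localize at a maximal ideal $\mathfrak m$, invoke Lemma~\ref{prm-localization} to transfer sobriety to $\mathrm{Prm}(R_{\mathfrak m})$, and then Proposition~\ref{prm-sober-local} to conclude that every proper ideal of $R_{\mathfrak m}$ is primary. Where you diverge is in how the dimension bound is extracted from that condition. The paper simply cites an external result (\cite[Theorem 4.4]{generalized-primary}) stating that a Noetherian local ring of dimension at least $2$ always possesses a proper non-primary ideal, and concludes by contradiction. You instead prove the contrapositive of that cited fact from scratch: if every proper ideal of the Noetherian local ring $S$ is primary, then any two primes are comparable (since for incomparable $\mathfrak p,\mathfrak q$ the radical ideal $\mathfrak p\cap\mathfrak q$ visibly fails the primary condition on a product $ab$ with $a\in\mathfrak p\setminus\mathfrak q$, $b\in\mathfrak q\setminus\mathfrak p$), so $\mathrm{Spec}(S)$ is a finite chain, and a chain of length at least $2$ is impossible by Krull's Hauptidealsatz applied to an element of $\mathfrak p_2\setminus\mathfrak p_1$. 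Both steps of your argument check out: $\mathfrak p\cap\mathfrak q$ is indeed proper and equal to its own radical, and $\mathfrak p_2$ is indeed minimal over the principal ideal you construct. What your route buys is self-containedness and transparency about the mechanism — the reader sees exactly why ``all proper ideals primary'' forces low dimension — at the cost of a slightly longer proof; the paper's route is shorter but opaque, deferring the entire algebraic content to a reference.
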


\begin{proof}
Suppose, by contradiction, that there exists a maximal ideal $\mathfrak{m}$ of $R$ such that $\dim(R_{\mathfrak m})\geqslant 2$. It follows that the local ring $R_{\mathfrak m}$ has proper ideals that are not primary \cite[Theorem 4.4]{generalized-primary} and thus ${\rm Prm}(R_{\mathfrak m})$ is not sober, by Proposition \ref{prm-sober-local}. This is a contradiction, by Lemma \ref{prm-localization}. 
\end{proof}

\begin{cor}\label{prm-domain1}
Let $R$ be a one-dimensional Noetherian local ring such that ${\rm Prm}(R)$ is sober. Then $R$ is an integral domain. 
\end{cor}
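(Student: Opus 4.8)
The plan is to combine Proposition~\ref{prm-sober-local} with an explicit construction. First I would note that, since $R$ is Noetherian and local, soberness of $\mathrm{Prm}(R)$ is equivalent, by Proposition~\ref{prm-sober-local}, to the equality $\mathrm{Prm}(R)=\mathrm{Prp}(R)$, i.e.\ to the statement that every proper ideal of $R$ is primary. In particular $(0)$ is primary, so $\mathfrak p:=\sqrt{(0)}$ is a prime ideal --- the unique minimal prime of $R$ --- and every zero-divisor of $R$ lies in $\mathfrak p$. Arguing by contradiction, suppose $R$ is not a domain; then $(0)$ is primary but not prime, which forces $\mathfrak p\neq(0)$ (otherwise $(0)$ would be prime), so I can fix a nonzero element $y\in\mathfrak p$. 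Since $\dim R=1$, the prime $\mathfrak p$ is properly contained in $\mathfrak m$, so I can also fix $x\in\mathfrak m\setminus\mathfrak p$; by the remark above, $x$ is a nonzero-divisor.

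The witness will be the principal ideal $\mathfrak a:=xyR$. It is proper (it lies in $\mathfrak m$), hence primary by the reduction above. Because $y\in\mathfrak p$ we have $\mathfrak a\subseteq\mathfrak p$, so $\sqrt{\mathfrak a}\subseteq\mathfrak p$; combined with $\sqrt{\mathfrak a}\supseteq\sqrt{(0)}=\mathfrak p$ this gives $\sqrt{\mathfrak a}=\mathfrak p$, so that $\mathfrak a$ is $\mathfrak p$-primary. But $xy\in\mathfrak a$ while $x\notin\mathfrak p=\sqrt{\mathfrak a}$, so $\mathfrak p$-primariness forces $y\in\mathfrak a=xyR$; writing $y=xyr$ with $r\in R$ gives $y(1-xr)=0$ with $1-xr$ a unit of the local ring $R$ (as $xr\in\mathfrak m$), whence $y=0$, a contradiction. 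Hence $\mathfrak a$ is not primary, contradicting $\mathrm{Prm}(R)=\mathrm{Prp}(R)$, and therefore $R$ must be a domain.

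I do not expect any real obstacle here: the one non-mechanical choice is the witness $\mathfrak a=xyR$ --- the product of a nonzero-divisor and a nonzero nilpotent --- designed so that its radical is exactly $\mathfrak p$ (forcing ``primary'' to mean ``$\mathfrak p$-primary'') while the relation $xy\in\mathfrak a$ violates the $\mathfrak p$-primary condition. The remaining points are routine: deducing from ``$(0)$ primary'' both that $\mathfrak p=\sqrt{(0)}$ is prime and that the zero-divisors lie in $\mathfrak p$, using $\dim R=1$ to get $\mathfrak p\subsetneq\mathfrak m$ and hence a nonzero-divisor in $\mathfrak m$, and a one-line Nakayama argument for $y\notin xyR$. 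Note that both hypotheses are genuinely used: Noetherian local enters through Proposition~\ref{prm-sober-local}, and one-dimensionality is what guarantees that $\mathfrak m$ strictly contains the minimal prime.
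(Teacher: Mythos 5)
Your proof is correct. Structurally it follows the same reduction as the paper: both arguments invoke Proposition \ref{prm-sober-local} to translate sobriety of $\mathrm{Prm}(R)$ into the statement that every proper ideal of $R$ is primary, and then derive a contradiction from the existence of a non-primary proper ideal when $R$ is not a domain. The difference is in how that last step is handled: the paper simply cites an external result (Theorem 2.4 of the reference \texttt{generalized-primary-2}) asserting $\mathrm{Prm}(R)\subsetneq\mathrm{Prp}(R)$ for such non-domains, whereas you construct the witness explicitly. Your construction is sound at every step: since $(0)$ is primary, $\mathfrak p=\sqrt{(0)}$ is the unique minimal prime and contains all zero-divisors, so any $x\in\mathfrak m\setminus\mathfrak p$ (which exists precisely because $\dim R=1$ forces $\mathfrak p\subsetneq\mathfrak m$) is regular; the ideal $\mathfrak a=xyR$ with $y\in\mathfrak p\setminus\{0\}$ then has radical exactly $\mathfrak p$, and the primary condition applied to $xy\in\mathfrak a$ with $x\notin\sqrt{\mathfrak a}$ forces $y\in xyR$, which the unit $1-xr$ of the local ring rules out. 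What your version buys is self-containedness (no appeal to the cited theorem) and an explicit exhibition of a non-primary ideal in any one-dimensional Noetherian local non-domain; what it costs is length, and it also quietly reproves a special case of the cited result rather than reusing it. Both hypotheses (Noetherian local, for the reduction; dimension one, for $\mathfrak p\subsetneq\mathfrak m$) are used exactly where they must be, which is consistent with the fact that the corollary fails in dimension zero.
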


\begin{proof}
If $R$ is not an integral domain, then ${\rm Prm}(R)\subsetneq {\rm Prp}(R)$ \cite[Theorem 2.4]{generalized-primary-2}. The conclusion follows again by Proposition \ref{prm-sober-local}.
\end{proof}

\begin{cor}\label{prm-domain2}
	Let $R$ be a one-dimensional Noetherian ring with a unique minimal prime ideal and such that ${\rm Prm}(R)$ is sober. Then $R$ is an integral domain. 
\end{cor}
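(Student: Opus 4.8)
The plan is to reduce to the local situation treated in Corollary~\ref{prm-domain1}, using the hypothesis on minimal primes to force every localization of $R$ at a maximal ideal to be one-dimensional.

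First I would introduce the unique minimal prime $\mathfrak p$ of $R$, so that $\mathrm{Nil}(R)=\mathfrak p$ and $R$ is an integral domain precisely when $\mathfrak p=(0)$, i.e.\ precisely when $R$ is reduced. The key preliminary observation is that $\dim(R_{\mathfrak m})=1$ for every maximal ideal $\mathfrak m$ of $R$: since every prime of $R$ contains $\mathfrak p$, the equality $\mathfrak p=\mathfrak m$ would make $\mathfrak m$ the only prime of $R$, contradicting $\dim(R)=1$; hence $\mathfrak p\subsetneq\mathfrak m$, which gives $\dim(R_{\mathfrak m})\geqslant 1$, while $\dim(R_{\mathfrak m})\leqslant\dim(R)=1$.

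Next, fixing a maximal ideal $\mathfrak m$, Lemma~\ref{prm-localization} yields that $\mathrm{Prm}(R_{\mathfrak m})$ is sober; since $R_{\mathfrak m}$ is a one-dimensional Noetherian local ring, Corollary~\ref{prm-domain1} then gives that $R_{\mathfrak m}$ is an integral domain, in particular reduced, so that $\mathrm{Nil}(R_{\mathfrak m})=(0)$. Finally, using that the formation of the nilradical commutes with localization, $\mathrm{Nil}(R)R_{\mathfrak m}=\mathrm{Nil}(R_{\mathfrak m})=(0)$ for every maximal ideal $\mathfrak m$; since an element of $R$ that vanishes in every localization at a maximal ideal is zero, we conclude $\mathrm{Nil}(R)=(0)$, hence $\mathfrak p=(0)$ and $R$ is an integral domain.

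I expect the only genuinely delicate point to be the dimension count in the second paragraph, i.e.\ excluding the possibility that some maximal ideal of $R$ localizes to a zero-dimensional (hence possibly non-reduced) Artinian local ring; this is exactly the place where the assumption that $R$ has a single minimal prime is needed, since without it the local reduction via Corollary~\ref{prm-domain1} no longer reaches every localization. The remaining steps are routine, relying only on the already-established local results and on the compatibility of the nilradical with localization.
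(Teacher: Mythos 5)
Your proof is correct and follows essentially the same route as the paper: localize at each maximal ideal, invoke Lemma~\ref{prm-localization} and Corollary~\ref{prm-domain1} to see that each $R_{\mathfrak m}$ is a domain, and conclude that the unique minimal prime (equivalently the nilradical) vanishes locally and hence globally. Your explicit check that $\dim(R_{\mathfrak m})=1$ for every maximal ideal $\mathfrak m$ is a welcome extra precaution, since Corollary~\ref{prm-domain1} requires one-dimensionality and the paper passes over this point silently.
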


\begin{proof}
Let $\mathfrak p$ be the unique minimal prime ideal of $R$. Take any maximal ideal $\mathfrak m$ of $R$. By Lemma \ref{prm-localization}, ${\rm Prm}(R_{\mathfrak m})$ is sober and thus $R_{\mathfrak m}$ is an integral domain, by Corollary \ref{prm-domain1}. It follows $\mathfrak{p}R_{\mathfrak m}=0$ and this holds for every maximal ideal $\mathfrak m$ of $R$. Thus $\mathfrak p=0$ and the conclusion follows. 
\end{proof}

\begin{thm}
Let $R$ be a Noetherian ring. Then, the following conditions are equivalent. 
\begin{enumerate}[\upshape (1)]
	\item The space ${\rm Prm}(R)$ is sober. 
	\item The space ${\rm Prm}(R)$ is spectral. 
	\item $R$ is a direct product of zero-dimensional rings and of one-dimensional domains. 
\end{enumerate}
\end{thm}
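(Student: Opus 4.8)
The plan is to derive (1)$\Leftrightarrow$(2) at once from Corollary \ref{cor-noet}: since $R$ is Noetherian, an ideal space is sober precisely when it is spectral. It then remains to prove (1)$\Leftrightarrow$(3). A bookkeeping observation used in both remaining implications is that $\mathrm{Prm}$ sends finite products of rings to topological disjoint unions: writing $R=R_1\times R_2$, every ideal of $R$ has the form $\mathfrak a_1\times\mathfrak a_2$, and such an ideal is primary exactly when one of the two factors is the whole ring and the other is a primary ideal; inspecting the subbasic closed sets $\{\mathfrak b_1\times\mathfrak b_2\}^\uparrow\cap\mathrm{Prm}(R)$ then shows $\mathrm{Prm}(R_1\times R_2)\cong\mathrm{Prm}(R_1)\sqcup\mathrm{Prm}(R_2)$, with each summand clopen. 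By induction this extends to any finite product.

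For (3)$\Rightarrow$(1), I would note that a Noetherian ring satisfying (3) is a \emph{finite} product $R=\prod_{i=1}^n R_i$ with every $R_i$ either zero-dimensional or a one-dimensional integral domain; by Proposition \ref{sober-dim1} each $\mathrm{Prm}(R_i)$ is sober, and a finite disjoint union of sober spaces is sober, so $\mathrm{Prm}(R)$ is sober.

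For (1)$\Rightarrow$(3), I would first invoke Corollary \ref{dim0-1} to get $\dim R\leqslant 1$. Let $\mathfrak p_1,\dots,\mathfrak p_n$ be the finitely many minimal primes of $R$. The crucial step is to show they are pairwise comaximal. If $\mathfrak p_i+\mathfrak p_j\subseteq\mathfrak m$ for some maximal ideal $\mathfrak m$ and $i\neq j$, then $\mathfrak p_iR_{\mathfrak m}$ and $\mathfrak p_jR_{\mathfrak m}$ are distinct minimal primes of $R_{\mathfrak m}$ (distinct because they contract back to $\mathfrak p_i\neq\mathfrak p_j$, minimal because the $\mathfrak p_i$ are minimal in $R$); but $R_{\mathfrak m}$ is Noetherian local with $\mathrm{Prm}(R_{\mathfrak m})$ sober by Lemma \ref{prm-localization}, and $\dim R_{\mathfrak m}\leqslant 1$, so $R_{\mathfrak m}$ is either Artinian local (hence has a unique prime) or, by Corollary \ref{prm-domain1}, a domain (hence has unique minimal prime $(0)$) --- a contradiction. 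Consequently $\mathrm{Spec}(R)=\bigsqcup_{i=1}^n V(\mathfrak p_i)$ is a partition into clopen sets, which yields a ring decomposition $R=\prod_{i=1}^n R_i$ with $\mathrm{Spec}(R_i)\cong V(\mathfrak p_i)$. Each $R_i$ is then Noetherian, satisfies $\dim R_i\leqslant 1$, has $\mathfrak p_i$ as its unique minimal prime, and has $\mathrm{Prm}(R_i)$ sober (being a clopen subspace of $\mathrm{Prm}(R)\cong\bigsqcup_i\mathrm{Prm}(R_i)$, which is sober). Finally, if $\dim R_i=0$ then $R_i$ is a zero-dimensional ring, while if $\dim R_i=1$ then Corollary \ref{prm-domain2} forces $R_i$ to be a one-dimensional domain; this is exactly (3).

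The main obstacle is the implication (1)$\Rightarrow$(3): soberness is a weak hypothesis that must be leveraged into a full product decomposition. The key mechanism is that soberness passes to localizations (Lemma \ref{prm-localization}) and, combined with the local structure results already established, forces every $R_{\mathfrak m}$ to have a single minimal prime --- which is precisely the comaximality of the minimal primes of $R$ that unlocks the decomposition. The elementary fact that $\mathrm{Prm}$ turns finite products into disjoint unions is routine but indispensable for transferring soberness in both directions.
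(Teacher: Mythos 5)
Your proposal is correct and follows essentially the same route as the paper: (1)$\Leftrightarrow$(2) via Corollary \ref{cor-noet}, (3)$\Rightarrow$(1) via Proposition \ref{sober-dim1} and the product-to-disjoint-union observation, and (1)$\Rightarrow$(3) by using Corollary \ref{dim0-1}, Lemma \ref{prm-localization} and Corollaries \ref{prm-domain1}--\ref{prm-domain2} to force the minimal primes to be pairwise comaximal and then decomposing $R$. The only cosmetic difference is that the paper realizes the decomposition explicitly as $R\cong\prod_i R/\mathfrak p_i^h$ via the Chinese Remainder Theorem (with $\mathfrak n^h=0$), whereas you invoke the clopen partition of $\mathrm{Spec}(R)$; these yield the same factors.
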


\begin{proof}
 The equivalence of conditions (1) and (2) immediately follows by Corollary \ref{cor-noet}. 
	
	Suppose that $R=R_1\times \ldots\times R_n$, where each $R_i$ is either a zero-dimensional ring or a one-dimensional domain. Then ${\rm Prm}(R)$ is homeomorphic to the disjoint union of the sober spaces ${\rm Prm}(R_i)$, for $1\leqslant i\leqslant n$ (Proposition \ref{sober-dim1}). Thus ${\rm Prm}(R)$ is sober since it is the disjoint union of finitely many sober spaces. 
	
	Conversely, assume that ${\rm Prm}(R)$ is sober. Then $\dim(R)\leqslant 1$, by Corollary \ref{dim0-1}. In case $\dim(R)=0$, there is nothing to prove. Then we can assume that $\dim(R)=1$. In case $R$ has a unique minimal prime ideal, $R$ is an integral domain, by virtue of Corollary \ref{prm-domain2}, and thus there is nothing to prove. Thus we can assume that $R$ is one-dimensional with $r\geqslant 2$ minimal prime ideals, say $\mathfrak p_1,\ldots, \mathfrak p_r$. We claim that  every maximal ideal of $R$ contains exactly one minimal prime ideal: indeed, if $\mathfrak m$ is a maximal ideal of $R$ and $\mathfrak p_i\neq \mathfrak p_j$ are contained in $\mathfrak m$, then $R_{\mathfrak m}$ would be a one-dimensional Noetherian local ring, and not an integral domain, such that ${\rm Prm}(R_{\mathfrak m})$ is sober (Lemma \ref{prm-localization}), contradicting Corollary \ref{prm-domain1}. The claim immediately implies that the union ${\rm Spec}(R)=\bigcup_{i=1}^r V(\mathfrak p_i)$ is disjoint. In particular, the minimal primes are paiwise comaximal. Let $\mathfrak n$ be the nilradical of $R$ and let $h$ be a positive integer such that $\mathfrak n^h=0$. Then the Chinese Remainder Theorem easily implies that $R$ is isomorphic to $\prod_{i=1}^r R/\mathfrak p_i^h$. Each $R/\mathfrak p_i^h$ has dimension at most $1$. In the latter case, the space ${\rm Prm}(R/\mathfrak p_i^h)$ is homeomorphic to the closed set $\{\mathfrak p_i^h\}^\uparrow\cap{\rm Prm}(R)$ and thus is sober; by Corollary \ref{prm-domain2}, $R/\mathfrak p_i^h$ must be a domain. The conclusion follows. 
\end{proof} 
\textbf{Acknowledgments.} The authors would like to thank the anonymous referee for his/her careful reading and for several remarks that helped to improve the presentation of the paper.
   
\bibliographystyle{plain}
\bibliography{idealspaces}

\end{document}